\documentclass[10pt]{amsart}

\usepackage{amsthm}
\usepackage{amssymb}
\usepackage{amsmath}
\usepackage[colorlinks=true, linkcolor=blue, citecolor=blue]{hyperref}

\newtheorem{theorem}{Theorem}[section]
\newtheorem{lemma}[theorem]{Lemma}
\newtheorem{corollary}[theorem]{Corollary}
\newtheorem{proposition}[theorem]{Proposition}
\theoremstyle{definition}
\newtheorem{example}[theorem]{Example}

\newcommand{\bs}{\backslash}

\title{The nucleus of a semisymmetric quasigroup}
\author{Andrew R.~Kozlik}
\address{Department of Algebra \\ Charles University \\ Sokolovsk\'a 83 \\ 186~75 Praha 8 \\ Czech Republic}
\email{andrew.kozlik@gmail.com}

\subjclass[2020]{05B07, 20N05}
\keywords{semisymmetric quasigroup, Mendelsohn triple system, Mendelsohn loop, nucleus, centre, Pasch configuration, Schreier extension}

\begin{document}
\begin{abstract}
A binary operation $\cdot$ which satisfies the identity $(x \cdot y) \cdot x = y$ is called a semisymmetric quasigroup.
We show that the nucleus of a semisymmetric quasigroup is either empty or an elementary abelian 2-group coinciding with the centre, and that a semisymmetric quasigroup with a non-empty nucleus is necessarily a Mendelsohn loop, i.e.\ the loop associated with a Mendelsohn triple system.
We derive necessary and sufficient conditions for the existence of a semisymmetric quasigroup of order~$n$ with nucleus of order~$m$.
Furthermore, we characterize the nuclear elements of a Mendelsohn loop in terms of a particular orientation of the Pasch configuration in the associated triple system.
\end{abstract}

\maketitle

\section{Introduction}

Let $V$ be a set of points.
A \emph{cyclic triple} on~$V$, denoted by $(x,y,z)$, or equivalently $(y,z,x)$ or $(z,x,y)$, is a set of ordered pairs $(x,y)$, $(y,z)$ and $(z,x)$ of distinct points from~$V$.
A \emph{Mendelsohn triple system} of order~$v$, MTS($v$), is a pair $(V,\mathcal{B})$ where $V$ is a set of $v$ points and $\mathcal{B}$ is a collection of cyclic triples of distinct points taken from~$V$ such that every ordered pair of distinct points from~$V$ appears in precisely one triple.
It is well known that an MTS($v$) exists if and only if $v\equiv 0$ or $1\pmod 3$ and $v\neq 6$, see~\cite{mendelsohn}.

Given an MTS $(V,\mathcal{B})$ one can define a binary operation~$\cdot$ on the set~$V$ by assigning $x\cdot x = x$ for all $x\in V$ and $x\cdot y=z$ for all $x$, $y \in V$, $x \neq y$, where $z$ is the third element in the cyclic triple containing the ordered pair $(x,y)$.
The resulting operation satisfies the identities $x\cdot x = x$ and $(x \cdot y) \cdot x = y$ for all $x$, $y\in V$.
The second of these identities is known as the \emph{semisymmetric law}.
Any binary operation satisfying these two identities is called an \emph{idempotent semisymmetric quasigroup} or, more simply, a \emph{Mendelsohn quasigroup}.
The process described above is reversible.
Given a Mendelsohn quasigroup one can obtain an MTS by assigning $(x,y,x\cdot y)\in\mathcal{B}$ for all $x$, $y\in V$, $x\neq y$.
There is therefore a one-to-one correspondence between Mendelsohn triple systems and Mendelsohn quasigroups.

A \emph{semisymmetric loop} or \emph{Mendelsohn loop} is a pair $(L,\cdot)$, where $L$ is a set containing an identity element~$e$ and $\cdot$ is an operation on $L$ satisfying the identities $x\cdot e = x$ and $(x \cdot y) \cdot x = y$.
It follows that $x\cdot x = (x\cdot e)\cdot x = e$ and that $e$ is a two-sided identity element, since $e\cdot x = (e\cdot x)\cdot e = x$.
If $(V,\mathcal{B})$ is an MTS($n$), then a Mendelsohn loop $(L,\cdot)$ is obtained by letting $L = V \cup \{e\}$ and defining $x\cdot x = e$ and $x \cdot y = z$ where $z$ is the third element in the cyclic triple containing the ordered pair $(x,y)$.
Again, the process is reversible and there is a one-to-one correspondence between Mendelsohn triple systems of order~$v$ and Mendelsohn loops of order~$v+1$.
Thus a Mendelsohn loop of order~$n$ exists if and only if $n\equiv 1$ or $2\pmod{3}$ and $n\neq 7$.

A \emph{Steiner loop} is a commutative Mendelsohn loop.
There is a similar one-to-one correspondence between Steiner loops and Steiner triple systems, see for example \cite[page 24]{CR}.
One particularly important class of Steiner loops are those that are isomorphic to $(\mathbb{F}_2^k,+)$ for some~$k$.
These correspond to the so-called \emph{projective} Steiner triple systems, see \cite[page 29]{CR}.

The binary operation will sometimes be replaced with juxtaposition, indicating higher precedence.
For example the semisymmetric law may then be written as $xy\cdot x = y$.

\begin{lemma}\label{lem:mirror}
Let $\cdot$ be a binary operation upon a set~$X$. If $xy \cdot x = y$ for all $x$, $y \in X$, then $x \cdot yx = y$ for all $x$, $y\in X$ as well.
\end{lemma}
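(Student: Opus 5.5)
We derive the mirror identity by substituting suitable elements into the semisymmetric law itself, without assuming $X$ is a quasigroup or has an identity; only the identity $xy\cdot x = y$ for all $x$, $y\in X$ is available.

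Fix $x$, $y\in X$ and write $z = yx$. We want to show $x\cdot z = y$. Apply the semisymmetric law with the pair $(y,x)$ in place of $(x,y)$. This gives $(y\cdot x)\cdot y = x$, that is,
\[ z\cdot y = x. \]
Next apply the law with the pair $(z,y)$. This gives $(z\cdot y)\cdot z = y$. Substituting $z\cdot y = x$ from the previous step yields
\[ x\cdot z = y, \]
which is exactly $x\cdot yx = y$.

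So the proof is just two consecutive instances of the hypothesis: first with $(y,x)$, then with $(yx,y)$. The only thing to be careful about is choosing the second substitution so that the inner product $z\cdot y$ collapses to $x$. There is no real obstacle, since the argument uses nothing beyond the identity itself. This is why the lemma holds for an arbitrary binary operation and not merely for quasigroups.
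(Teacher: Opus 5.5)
Your proof is correct and matches the paper's one-line argument $x \cdot yx = (yx \cdot y)\cdot yx = y$ exactly. Both use two instances of the semisymmetric law: the pair $(y,x)$ to rewrite $x$ as $yx\cdot y$, then the pair $(yx,y)$ to collapse the result to $y$.
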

\begin{proof}
We have $x \cdot yx = (yx \cdot y) \cdot yx = y$.
\end{proof}

From this lemma we can see that any semisymmetric binary operation on a set~$X$ necessarily yields a quasigroup in which $y\backslash x = x\cdot y = y/x$.

The \emph{left nucleus} $N_\lambda$, \emph{middle nucleus} $N_\mu$ and \emph{right nucleus} $N_\rho$ of a quasigroup~$Q$ are defined as
\begin{align*}
N_\lambda(Q) &= \{\, u\in Q : u \cdot xy = ux\cdot y\text{ for all } x,y\in Q\,\},\\
N_\mu(Q)     &= \{\, u\in Q : x \cdot uy = xu\cdot y\text{ for all } x,y\in Q\,\},\\
N_\rho(Q)    &= \{\, u\in Q : x \cdot yu = xy\cdot u\text{ for all } x,y\in Q\,\}.
\end{align*}
The \emph{nucleus} $N(Q) = N_\lambda(Q)\cap N_\mu(Q)\cap N_\rho(Q)$ of~$Q$ is a subquasigroup of~$Q$.
The \emph{centre} of $Q$ is defined as
\[
  Z(Q)=N(Q)\cap\{\, x\in Q : xy = yx \text{ for all } y\in Q\,\}.
\]

In the next section we study the nuclei of a semisymmetric quasigroup, finding that the left, middle and right nucleus always coincide with the centre, see Proposition~\ref{prop:nuclei}.
In Section~\ref{sec:existence} we determine the existence spectrum of semisymmetric quasigroups with empty nucleus, see Theorem~\ref{thm:empty}, and derive the necessary and sufficient conditions for the existence of a Mendelsohn loop of order~$n$ with nucleus of order~$m$, see Theorem~\ref{thm:nucleus}.
In Steiner loops the question of nucleus size was already resolved by Donovan et al.\ \cite{donovan1,donovan2}, who showed that the nucleus of a Steiner loop is an elementary abelian $2$-group and determined the complete spectrum of admissible nucleus orders for Steiner loops.
The present author revisited the topic in connection with the maxi-Pasch problem in Steiner triple systems~\cite{kozlik}.
Recently, Falcone, Figula and Galici~\cite{FFG} developed an extension theory for Steiner triple systems in terms of Schreier extensions of the associated Steiner loops.
Their approach builds on the fact that the non-trivial central elements of a Steiner loop are precisely the Veblen points of the triple system, i.e.\ points through which any two distinct triples generate a Fano plane, a characterization which goes back to Donovan~\cite{donovan1}, see also \cite[Theorem~2.4]{kozlik}.
The corresponding combinatorial characterization of the nuclear elements of a Mendelsohn loop is given in Proposition~\ref{prop:pasch}.
The results of Section~\ref{sec:existence} on Mendelsohn loops with non-trivial nucleus can be viewed as a non-commutative counterpart of the Schreier extension theory of Steiner loops.
One notable difference is that a Steiner loop whose centre has index at most~4 is an elementary abelian group~\cite{donovan2}, whereas Example~\ref{ex:8} exhibits a non-associative Mendelsohn loop whose nucleus has index~4.

\section{Properties}

\begin{lemma}\label{lem:commut}
Let $Q$ be a semisymmetric quasigroup.
If $u$ lies in the left, middle or right nucleus of~$Q$, then $ux = xu$ for all $x\in Q$.
\end{lemma}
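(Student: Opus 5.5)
The plan is to handle the three nuclei separately by direct substitution into the defining identity. Throughout I will use the semisymmetric law $xy\cdot x=y$, its mirror $x\cdot yx=y$ from Lemma~\ref{lem:mirror}, and the resulting rotation rule: $ab=c$ implies $bc=a$ and $ca=b$. The basic trick in each case is to choose a substitution that makes one side collapse by the semisymmetric law. This first shows that all squares $ww$ are equal, and then pins down $ux$ against $xu$.

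\emph{Left nucleus.} Suppose $u\cdot xy=ux\cdot y$ for all $x,y$.
\begin{itemize}
\item Substituting $y=ux$ and using $x\cdot ux=u$ gives $uu=(ux)(ux)$. As $x$ varies, $ux$ runs over all of $Q$, so $ww=uu=:e$ for every $w$.
\item Substituting $y=x$ then gives $ue=(ux)x$. Write $c:=ue$; then $(ux)x=c$, and by the rotation rule $x\cdot c=ux$ for all $x$.
\item Taking $x=u$ gives $uc=uu$, so left cancellation yields $c=u$. Hence $xu=ux$ for all $x$.
\end{itemize}

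\emph{Right nucleus.} The opposite operation $x\circ y=yx$ is again semisymmetric, since $(x\circ y)\circ x=x\cdot yx=y$. The right nucleus of $(Q,\cdot)$ is the left nucleus of $(Q,\circ)$. Commutation of $u$ with everything is the same statement in both operations, so this case follows from the previous one.

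\emph{Middle nucleus.} Suppose $x\cdot uy=xu\cdot y$ for all $x,y$. Let $\varphi(x)=xu$, which is a bijection.
\begin{itemize}
\item Substituting $y=zu$ and using $u\cdot zu=z$ gives $xz=\varphi(x)\varphi(z)$ for all $x,z$.
\item Putting $z=x$ shows that squares are constant, equal to $e:=uu=\varphi(u)$, by the same argument as before.
\item Putting $z=u$ gives $xu=\varphi(x)\,e$, that is $\varphi(x)=\varphi(x)e$. Since $\varphi$ is onto, $we=w$ for all $w$.
\item Then rotating $we=w$ gives $ew=w$, so $e$ is a two-sided identity.
\item Finally, putting $x=u$ gives $uz=e\,\varphi(z)=zu$, as required.
\end{itemize}

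The main obstacle is the middle-nucleus case, because it has no obvious duality reducing it to the left case. Naive substitutions there, such as $y=x$, tend to collapse into tautologies under the rotation rule. The key step is the substitution $y=zu$. It turns the middle-nucleus condition into the statement that right multiplication by $u$ preserves the operation, and the rest follows by specializing $z$.
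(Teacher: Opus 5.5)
Your proof is correct apart from one dispensable step (see below), but it is organised differently from the paper's.

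The paper handles each nucleus with a single chain of equalities. The chain ends in $u\cdot ux = x$ for the left and middle nucleus, or in $xu\cdot u = x$ for the right nucleus. Either equation is equivalent to $ux = xu$ by the semisymmetric law. In particular, the paper computes the right-nucleus case directly. Your passage to the opposite operation $x\circ y = yx$ is a clean way to avoid that third computation.

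Your route first establishes that $e = uu$ is a two-sided identity, and only then compares $ux$ with $xu$. In the middle case it isolates the identity $xz = (xu)(zu)$, from which everything follows by specialising $z$. The paper's chains contain the same information implicitly, since they show $uu\cdot x = x$, resp.\ $x\cdot uu = x$. Your version makes explicit that a single element in any one-sided nucleus already makes $Q$ a Mendelsohn loop. The paper records this only later, in Lemma~\ref{lem:loop}, via Proposition~\ref{prop:nuclei}.

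The step that does not work as written is the second bullet of the middle case. Putting $z = x$ in $xz = \varphi(x)\varphi(z)$ gives only $xx = (xu)(xu)$. That says the square map is invariant under $x\mapsto xu$. Unlike the left case, neither side is the fixed element $uu$, so ``the same argument as before'' does not show that squares are constant.

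Fortunately nothing later depends on it. The next bullets use only the definition $e := uu = \varphi(u)$. The substitution $z = u$ gives $we = w$ for all $w$, and rotating $we = w$ yields both $ew = w$ and $ww = e$. So constancy of squares is a consequence rather than an input, and you can simply delete that bullet.
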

\begin{proof}
We will show that if $u$ lies in one of the nuclei, then $u \cdot ux = x$ or $ xu \cdot u = x$ for all $x\in Q$.
In a semisymmetric quasigroup either one of these equalities implies that $ux = xu$.

If $u\in N_\lambda(Q)$, then $u\cdot ux = uu \cdot x = (u\cdot (xu \cdot x)) \cdot x = ((u \cdot xu)\cdot x) \cdot x = xx \cdot x = x$.

If $u\in N_\mu(Q)$, then $u\cdot ux = uu \cdot x = uu \cdot (u \cdot xu) = (uu \cdot u) \cdot xu = u \cdot xu = x$.

If $u\in N_\rho(Q)$, then $xu \cdot u = x \cdot uu = x \cdot ((x \cdot ux) \cdot u) = x\cdot (x \cdot (ux \cdot u)) = x \cdot xx = x$.
\end{proof}

\begin{proposition}
\label{prop:nuclei}
Let $Q$ be a semisymmetric quasigroup.
Then $Z(Q) = N(Q) = N_\lambda(Q) = N_\mu(Q) = N_\rho(Q)$ and if $N(Q)$ is non-empty, then it is an elementary abelian 2-group.
\end{proposition}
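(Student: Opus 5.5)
Our plan is to show that if $u$ lies in any one of the three nuclei, then $u$ lies in all three and is central; this yields the chain of equalities, and the group structure then follows. Lemma~\ref{lem:commut} already gives $ux = xu$ for all $x$, so it only remains to show that membership in one nucleus forces membership in the other two.

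First we derive from commutativity of $u$ some identities for $u$. By Lemma~\ref{lem:commut}, $u \cdot ux = x$ and $xu\cdot u = x$ for all $x$. In particular, taking $x=u$ gives $u\cdot uu = u$. Next we show $uu$ is a two-sided identity. For $u\in N_\lambda$: $uu\cdot x = u\cdot ux = x$, and $x\cdot uu = uu\cdot x$ follows by semisymmetry, since $(uu\cdot x)\cdot uu = x$ gives $x\cdot uu = x$. The cases $N_\mu$ and $N_\rho$ are similar, via $uu\cdot x = u\cdot ux$ and $x\cdot uu = xu\cdot u$ respectively. So $Q$ has an identity $e=uu$. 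Then $Q$ is a loop satisfying $(xy)x=y$, so $xx = (xe)x = e$ for every $x$, and $ex = xe = x$.

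Now we transfer between nuclei using the semisymmetric law and Lemma~\ref{lem:mirror}, which say that the left and right divisions are the multiplication itself: $y\bs x = x\cdot y = y/x$. Suppose $u\in N_\lambda$, i.e.\ $u\cdot xy = ux\cdot y$. Put $z=xy$, so that $y = zx$ (semisymmetry: $xy\cdot x = y$) and $x = yz$ (Lemma~\ref{lem:mirror}). Substituting, we get $uz = ux\cdot zx$. Multiplying on the right by $ux$ and using $(ux\cdot w)\cdot ux = w$ gives $uz\cdot ux = zx$. This is a middle- or right-type relation in the free variables $z,x$, and combining it with commutativity of $u$ and the identities $u\cdot uw=w$ we aim to rewrite it as $z\cdot ux = zu\cdot x$ or $x\cdot zu = xz\cdot u$. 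The main obstacle is precisely this step: choosing the substitutions so that each nucleus condition maps to another. I would first try to show that the map $(a,b,c)\mapsto$ its cyclic rotation, which preserves the triple relation $ab=c$ in a semisymmetric quasigroup, permutes the three associativity conditions, so that $N_\lambda\subseteq N_\mu\subseteq N_\rho\subseteq N_\lambda$ is obtained by one computation applied cyclically. If this fails, I would instead use the fact that $Q$ is now a loop with $x^{-1}=x$ and two-sided inverse properties ($x\cdot xy = y = yx\cdot x$ need not hold in general, but it holds for $x=u$), and apply the standard loop fact that $u\in N_\lambda$ together with $u$ being central implies $u\in N_\rho$ via $x\cdot yu = x\cdot uy$, and so on.

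Once $N_\lambda=N_\mu=N_\rho=N$ and every element of $N$ commutes with everything, we have $Z(Q)=N(Q)$ directly from the definition. If $N$ is non-empty, then it is a subquasigroup containing $e$ (since $uu=e$), hence a group under the associative operation; it is commutative, and every element satisfies $uu=e$. Therefore $N(Q)$ is an elementary abelian 2-group.
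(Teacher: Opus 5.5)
Your proposal leaves the actual content of the proposition unproved. The heart of the statement is the chain $N_\lambda(Q)\subseteq N_\mu(Q)\subseteq N_\rho(Q)\subseteq N_\lambda(Q)$, used together with Lemma~\ref{lem:commut}. You stop after deriving $uz\cdot ux = zx$ from $u\in N_\lambda(Q)$. You call its conversion into another nucleus condition ``the main obstacle'' and then only list strategies you might try. Nothing at all is said about inclusions starting from $N_\mu(Q)$ or $N_\rho(Q)$.

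The parts you do carry out are correct: the identities $u\cdot ux = x = xu\cdot u$, the fact that $uu$ is a two-sided identity, and the passage to an elementary abelian $2$-group once the nuclei coincide. But none of them touches the equality of the nuclei.

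Your fallback is also unsound, because it is not a general loop fact that a commuting element of the left nucleus lies in the right nucleus. For a commuting $u\in N_\lambda$, the middle and right conditions both reduce to $x\cdot uy = u\cdot xy$. So they are equivalent to each other, but neither is forced. For instance, on $\{0,1\}\times\{0,1,2,3\}$ put $(a,x)(b,y) = (a+b,\, x *_b y)$, with $a+b$ taken modulo~$2$, $*_0$ addition modulo~$4$ and $*_1$ bitwise exclusive-or. This is a loop in which $u=(1,0)$ commutes with everything and lies in $N_\lambda$. Yet it lies in neither $N_\mu$ nor $N_\rho$, since $1*_0 1\neq 1*_1 1$. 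So the semisymmetric law must be used essentially in each inclusion.

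The missing computations are short, and your own relation already gives the first one. Replacing $z$ by $uz$ in $uz\cdot ux = zx$ and using $u\cdot uz = z$ and $uz = zu$ gives $z\cdot ux = zu\cdot x$, i.e.\ $u\in N_\mu(Q)$. You would still need an inclusion out of $N_\mu(Q)$ and one out of $N_\rho(Q)$.

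The paper obtains all three inclusions of the cycle by direct manipulations of this kind. For example, for $u\in N_\rho(Q)$ it computes $u\cdot xy = xy\cdot u = (y\cdot(xy\cdot u))\cdot y = ((y\cdot xy)\cdot u)\cdot y = xu\cdot y = ux\cdot y$. This uses only Lemma~\ref{lem:commut}, the semisymmetric law and Lemma~\ref{lem:mirror}. Your detour proving that $uu$ is an identity of $Q$ is valid (it anticipates Lemma~\ref{lem:loop}), but it is not needed for these computations.
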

\begin{proof}
We show that $N_\lambda(Q)\subseteq N_\mu(Q)\subseteq N_\rho(Q)\subseteq N_\lambda(Q)$.

If $u\in N_\lambda(Q)$, then for any $x$, $y\in Q$ we have
\[
  xu \cdot y = ux \cdot y = u \cdot xy = x\cdot ((u \cdot xy) \cdot x) = x\cdot (u \cdot (xy \cdot x)) = x \cdot uy.
\]

If $u\in N_\mu(Q)$, then for any $x$, $y\in Q$ we have
\[
\begin{split}
  xy \cdot u = u \cdot xy &= (uy \cdot (u \cdot xy)) \cdot uy \\
                          &= ((uy \cdot u) \cdot xy) \cdot uy = (y \cdot xy) \cdot uy = x \cdot uy = x \cdot yu.
\end{split}
\]

If $u\in N_\rho(Q)$, then for any $x$, $y\in Q$ we have
\[
  u \cdot xy = xy \cdot u = (y \cdot (xy \cdot u)) \cdot y = ((y \cdot xy) \cdot u) \cdot y = xu \cdot y = ux \cdot y.
\]

By the previous lemma the nucleus coincides with the centre.
The rest follows from the facts that every non-empty associative quasigroup is a group and that $x\cdot x$ is the identity element of~$N(Q)$ for every $x\in N(Q)$.
\end{proof}

\begin{corollary}
A Mendelsohn loop is associative if and only if it is isomorphic to $(\mathbb{F}_2^k,+)$ for some~$k$.
\end{corollary}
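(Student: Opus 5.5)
For the forward direction, suppose the Mendelsohn loop $(L,\cdot)$ is associative. Then $N(L)=L$, and since $L$ contains the identity $e$ it is non-empty. Proposition~\ref{prop:nuclei} then says that $L=N(L)$ is an elementary abelian $2$-group. A finite elementary abelian $2$-group is a vector space over $\mathbb{F}_2$, so $L\cong(\mathbb{F}_2^k,+)$ with $k=\log_2|L|$. Here I use that Mendelsohn loops arising from MTSs are finite. If infinite loops are also allowed, the same reasoning gives a vector space over $\mathbb{F}_2$ of arbitrary dimension, and I would note this alongside the statement.

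For the converse, I will verify directly that $(\mathbb{F}_2^k,+)$ is a Mendelsohn loop. The element $0$ is an identity, so $x+0=x$. The semisymmetric law holds because $(x+y)+x=y$, using commutativity and the fact that $x+x=0$ in characteristic $2$. The group is associative, so every loop isomorphic to it is an associative Mendelsohn loop. This covers the loop axioms exactly as they are given in the introduction.

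I do not expect a real obstacle, since Proposition~\ref{prop:nuclei} does the work. The only point needing care is that the elementary abelian $2$-group structure of $N(L)$ has identity $x\cdot x$, and this should match the loop identity $e$. This holds automatically, because $x\cdot x=e$ in any Mendelsohn loop.
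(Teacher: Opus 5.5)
Your proposal is correct and matches the paper, which states this corollary without a separate proof as an immediate consequence of Proposition~\ref{prop:nuclei}. As you say, associativity gives $N(L)=L\neq\emptyset$, so $L$ is an elementary abelian $2$-group, and the converse is a direct check. Your remark about finiteness is also appropriate: in the infinite case one gets an $\mathbb{F}_2$-vector space of arbitrary dimension, which need not be a full direct power.
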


For $a\in Q$ denote by $L_a$ the \emph{left translation} $x\mapsto a\cdot x$.
A loop~$Q$ is said to be \emph{left conjugacy closed} (LCC) if for all $x$, $y\in Q$ there exists $w\in Q$ such that $L_x L_y L_x^{-1} = L_w$, i.e.\ $x \cdot (y \cdot (x\bs z)) = wz$ for all $z\in Q$.
If such $w$ exists, then by substituting $z = x$ we see that it must be $w = (xy)/x$.

\begin{lemma}\label{lem:lcc:asoc}
A Mendelsohn loop is LCC if and only if it is associative.
\end{lemma}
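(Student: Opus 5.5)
In a Mendelsohn loop we have $x\bs z = z\cdot x$ (from Lemma~\ref{lem:mirror}, as $y\bs x = x\cdot y$) and $(xy)/x = x\cdot xy$ (since $y/x = x\cdot y$). So the LCC condition says that for all $x,y,z$,
\[
  x\cdot (y\cdot zx) = (x\cdot xy)\cdot z .
\]
The easy direction comes first. If the loop is associative, then by the Corollary it is $(\mathbb{F}_2^k,+)$. Every group is LCC, with $w = xyx^{-1}$, so this direction is immediate.

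For the converse, assume the identity above. I would specialise it to show that every $y$ lies in a nucleus, and then invoke Proposition~\ref{prop:nuclei}. The first step is to use $xx=e$ and $zz=e$ to get a useful substitution. Putting $z=x$ gives $x\cdot ye = x\cdot y$ on the left and $(x\cdot xy)\cdot x = xy$ on the right by the semisymmetric law, so this gives nothing. Next I would put $y = x$. The left side is $x\cdot(x\cdot zx)$. Since $x\cdot zx = z$ by Lemma~\ref{lem:mirror}, the left side is $xz$. The right side is $(x\cdot e)z = xz$, again trivial. So the work must be in the general identity itself.

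The plan is to write $w = x\cdot xy$ and rewrite the identity as
\[
  L_x L_y R_x = L_w ,
\]
where $R_x\colon z\mapsto zx$. Note that $R_x = L_x^{-1}$, since $x\bs z = zx$. I would exploit the fact that the map $y\mapsto x\cdot xy = (xy)/x$ is a bijection, and then compare the identity with the semisymmetric law to extract associativity. Concretely, I would set $z = y$ or $z = xy$ and simplify using $(ab)a=b$ and $a(ba)=b$. With $z=xy$, the left side is $x\cdot(y\cdot(xy\cdot x)) = x\cdot(y\cdot y) = x$, so
\[
  (x\cdot xy)\cdot xy = x .
\]
This says that $x\cdot xy$ and $xy$ satisfy $a\cdot b=x$ with $b=xy$. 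Semisymmetry gives $(b\cdot x)\cdot b$... here I would aim to deduce $x\cdot xy = y$, i.e.\ $w=y$, probably via $a b = x \Rightarrow b x = a$ (semisymmetric: $(ab)a = b$, so $x a = b$, and $bx = a$ by Lemma~\ref{lem:mirror}). This gives $x\cdot xy = xy\cdot x = y$, so $xy\cdot x = y$ is consistent, and hence $x\cdot xy = y$ for all $x,y$.

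Once $x\cdot xy=y$ holds, the LCC identity becomes $x\cdot(y\cdot zx) = yz$ for all $x,y,z$. Then $x\cdot xy = y$ together with $xy\cdot x = y$ forces $xy = yx$ (as in Lemma~\ref{lem:commut}). Applying $L_x$ to both sides gives $y\cdot zx = x\cdot yz$, and commutativity turns this into associativity:
\[
  y\cdot zx = x\cdot yz = yz\cdot x .
\]
Hence the nucleus is the whole loop, and the Corollary finishes the proof. The main obstacle I expect is the middle step, deriving $x\cdot xy = y$ cleanly from the specialised identity. If the $z=xy$ substitution does not yield it, I would instead try $z = yx$ or $z = y$ and combine the result with the identity $x\cdot(y\cdot zx)$, computed via Lemma~\ref{lem:mirror}.
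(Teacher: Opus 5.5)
Your proof is correct and follows the same route as the paper: extract $x\cdot xy = y$ from the identity $x\cdot(y\cdot zx) = (x\cdot xy)\cdot z$, then substitute it back to obtain associativity. The paper gets $x\cdot xy = y$ in one step by taking $z=e$, where the left side becomes $x\cdot yx = y$ by Lemma~\ref{lem:mirror}. It then applies the semisymmetric law $(x\cdot t)\cdot x = t$ to $x\cdot(y\cdot zx) = yz$, which gives $y\cdot zx = yz\cdot x$ directly. Your detours through $z = xy$ and through commutativity are therefore valid but unnecessary.
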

\begin{proof}
If $Q$ is an LCC loop, then we have $x \cdot (y \cdot (x\bs z)) = ((xy)/x) \cdot z$ for all $x$, $y$, $z\in Q$.
In a Mendelsohn loop this expression can be rewritten as $x \cdot (y \cdot zx) = (x \cdot xy) \cdot z$.
For $z=e$ it then simplifies to $y = x \cdot xy$.
Now for all $x$, $y$, $z\in Q$ we have $y \cdot zx = (x \cdot (y \cdot zx)) \cdot x = ((x \cdot xy) \cdot z) \cdot x = yz \cdot x$.

Conversely, if $Q$ is associative, then it is a group and is LCC with $w = xyx^{-1}$.
\end{proof}

The nuclear elements admit the following combinatorial characterization, a directed counterpart of the characterization of the central elements of a Steiner loop as the Veblen points of the corresponding Steiner triple system.

\begin{proposition}\label{prop:pasch}
Let $(V,\mathcal{B})$ be a Mendelsohn triple system, let $L$ be the associated Mendelsohn loop and let $u\in V$.
Then $u \in N(L)$ if and only if for any two distinct elements $x$, $y \in V\setminus\{u\}$ either $\{(x,u,y)$, $(u,x,y)\}\subseteq\mathcal{B}$ or there exist elements $v$, $w$, $z\in V$ such that $\{(x,u,v)$, $(u,y,w)$, $(v,y,z)$, $(x,w,z)\}\subseteq\mathcal{B}$.
\end{proposition}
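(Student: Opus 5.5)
By Proposition~\ref{prop:nuclei} we have $N(L)=N_\mu(L)$ and every nuclear element commutes with everything, so I plan to compare the combinatorial condition with the middle nucleus identity $x\cdot uy = xu\cdot y$. First I read off what the triples mean in the loop. $\{(x,u,y),(u,x,y)\}\subseteq\mathcal{B}$ says exactly $xu=y=ux$. The four triples $(x,u,v)$, $(u,y,w)$, $(v,y,z)$, $(x,w,z)$ say $v=xu$, $w=uy$, $z=vy=xw$, that is, $xu\cdot y = x\cdot uy$, with all the points involved lying in $V$ (so none equals $e$) and each triple consisting of distinct points.

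\emph{Forward direction.} Let $u\in N(L)$ and let $x,y\in V\setminus\{u\}$ be distinct. Put $v=xu$, $w=uy$, $z=vy$. By $u\in N_\mu(L)$ we get $z=xw$. I then check the distinctness conditions:
\begin{itemize}
\item $v\notin\{x,u,e\}$, because $u\neq e$, $x\neq e$ and $x\neq u$;
\item similarly $w\notin\{u,y,e\}$;
\item $v\neq y$ is needed for $(v,y,z)$ and for $z\neq e$;
\item $w\neq x$ is needed for $(x,w,z)$; since $uy=x$ iff $y=u\bs x = xu$, this is the same condition as $v\neq y$;
\item the remaining inequalities (such as $z\neq v,y$) follow from the loop laws.
\end{itemize}
So the only exceptional case is $y=xu$. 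In that case Lemma~\ref{lem:commut} gives $ux=xu=y$, which is the first alternative.

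\emph{Converse.} Assume the condition. For distinct $x,y\in V\setminus\{u\}$ I first show $x\cdot uy=xu\cdot y$.
\begin{itemize}
\item In the second alternative this identity is exactly what the triples say.
\item In the first alternative $xu=ux=y$. Then $xu\cdot y=yy=e$, and $uy=u\cdot xu=x$ by Lemma~\ref{lem:mirror}, so $x\cdot uy=xx=e$ as well.
\end{itemize}
The remaining cases are those where $x$ or $y$ equals $e$ (trivial), $x=y$, $x=u$ or $y=u$. These reduce to the identities $u\cdot ux=x$, $xu\cdot u=x$ and $x\cdot ux=u$, each of which follows from $ux=xu$ together with the semisymmetric law and Lemma~\ref{lem:mirror}.

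\emph{Main obstacle.} The hard step is therefore proving, from the condition alone, that $ux=xu$ for every $x$. My plan is to apply the hypothesis to cleverly chosen pairs. For $x\neq u$, the choice $y=xu$ gives only a tautology, so I would try $y=ux$ and $y=xu\cdot u$ and the like. In each case I would expand the second alternative using $a\cdot ua$-type identities ($(ux)u=x$, $u(xu)=x$), aiming to force $u\cdot ux=x$, which is equivalent to $ux=xu$.

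If no single substitution works, the fallback is different. Having established $x\cdot uy=xu\cdot y$ for all $x,y\neq u$, I would rerun the computations from the proof of Proposition~\ref{prop:nuclei} and Lemma~\ref{lem:commut}, choosing the auxiliary elements there to avoid $u$ and the diagonal. Where a computation passes through a forbidden pair, I would substitute the neighbouring pair via the loop identity $x\cdot x=e$. This should push the identity out to $N_\mu(L)$ in full.
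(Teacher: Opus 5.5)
Your forward direction is correct and matches the paper. So is your reduction of the converse to the claim that $u$ commutes with every element of $L$. But that claim is never proved, and your remark that the pair $y = xu$ ``gives only a tautology'' is exactly where the argument goes wrong. That pair is tautological for the identity $x\cdot uy = xu\cdot y$, since both sides equal $e$, but it is not tautological for the combinatorial hypothesis. For $x\in V\setminus\{u\}$ the point $y = xu$ lies in $V\setminus\{u,x\}$, because $x\neq u$, $x\neq e$ and $u\neq e$; so the hypothesis applies to the pair $x$, $y$. The second alternative cannot hold there: $(x,u,v)\in\mathcal{B}$ forces $v = xu = y$, and then $(v,y,z)$ would contain a repeated point. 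Hence the first alternative holds, and $(u,x,y)\in\mathcal{B}$ says precisely $ux = y = xu$. You already noticed in the forward direction that $y = xu$ is the degenerate case of the Pasch alternative. The converse needs only that observation read in reverse, and this is how the paper obtains commutativity before handling the remaining cases exactly as you do.

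Your fallback, as described, would not close the gap. The instances of $x\cdot uy = xu\cdot y$ with $x = u$ or $y = u$ are equivalent to $uy = yu$ and $ux = xu$ respectively. The computation in Lemma~\ref{lem:commut} begins with such an instance, $u\cdot ux = uu\cdot x$, so no rerouting through ``neighbouring pairs'' avoids what you are trying to prove. A counting argument could rescue it. Fix $a\neq u$; the identity already holds for all $b\neq u$, including $b=a$ and $b=e$, which need no commutativity. Then the permutations $b\mapsto a\cdot ub$ and $b\mapsto au\cdot b$ of $L$ agree on $L\setminus\{u\}$, hence also at $u$. This gives $au\cdot u = a$, i.e.\ $ua = au$. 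Neither this nor the direct argument above appears in your proposal, so as written the converse is incomplete.
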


\begin{proof}
Suppose that $u \in N(L)$ and let $x$, $y \in V\setminus\{u\}$ be distinct.
Then there exists $(x,u,v)\in\mathcal{B}$ for some $v\in V$.
If $v = y$, then $xu = y$ and by Lemma~\ref{lem:commut} we have $ux = y$, giving us $(u,x,y)\in\mathcal{B}$, which is the first case.
Suppose now that $v \neq y$.
There also exists $(u,y,w)$, $(v,y,z)\in\mathcal{B}$ for some $w$, $z\in V$, where $w\neq x$, because $w = x$ would imply $v = y$.
It follows that $xw = x\cdot uy = xu \cdot y = vy = z$, so $(x,w,z)\in\mathcal{B}$.

Conversely, suppose that for any two distinct elements $x$, $y\in V\setminus\{u\}$ at least one of the two alternatives holds.
First let $x \in V \setminus \{u\}$ and $y = xu$.
The second alternative cannot occur for this pair, since $(x,u,v)\in\mathcal{B}$ gives $v = xu = y$, duplicating a point of $(v,y,z)$.
Hence $\{(x,u,y)$, $(u,x,y)\}\subseteq\mathcal{B}$ and $y = ux$.
Thus $u$ commutes with all elements of~$L$.
Furthermore $xu\cdot y = yy = e = xx = x\cdot uy$.
Now let $x$, $y \in L$ be arbitrary.
For $x = e$ or $y = e$, trivially $xu\cdot y = x\cdot uy$.
If $x = y$, then using Lemma~\ref{lem:mirror} we have $xu\cdot y = xu \cdot x = u = x \cdot ux = x\cdot uy$.
If $x \neq y$ and $x = u$, then $xu\cdot y = uu \cdot y = y = u\cdot yu = u \cdot uy = x\cdot uy$.
Similarly for $x \neq y$ and $y = u$.
Finally, if $x$ and $y$ are distinct elements of $V \setminus \{u\}$ with $y \neq xu$, then the first alternative is ruled out, since $(x,u,y)\in\mathcal{B}$ would give $y = xu$, so the second alternative applies and we have $xu\cdot y = vy = z = xw = x \cdot uy$.
Hence $u \in N_\mu(L)$, and $u \in N(L)$ by Proposition~\ref{prop:nuclei}.
\end{proof}

Note that the six points of the configuration in the second alternative are necessarily distinct, since $u = z$ would give $x = uv = zv = y$, while $v = w$ would give $uy = w = v = xu = ux$, hence $x = y$, as $u$ commutes with all elements of~$L$.
Disregarding the cyclic orders, the four triples of the configuration therefore form a Pasch configuration, with each of the six points lying on exactly two of the four blocks.
The cyclic orders constitute one of the orientations of the Pasch configuration into cyclic triples.
It is easy to verify that up to isomorphism there are exactly three such orientations:
\begin{enumerate}
\item $\{(x,u,v)$, $(u,w,y)$, $(v,y,z)$, $(x,z,w)\}$,
\item $\{(x,u,v)$, $(u,y,w)$, $(v,y,z)$, $(x,w,z)\}$ and
\item $\{(x,u,v)$, $(u,y,w)$, $(v,z,y)$, $(x,w,z)\}$
\end{enumerate}
whose automorphism groups are $A_4$, $C_4$ and $C_3$, respectively.
The automorphism group of the first configuration acts transitively on the six points and is generated by the permutations $(x,u,v)(y,z,w)$ and $(v,w)(x,y)$.
The configuration of Proposition~\ref{prop:pasch} is the second of the three, and its automorphism group is generated by $(u,x,z,y)(v,w)$, while the automorphism group of the third configuration is generated by $(x,u,v)(y,z,w)$.

If $L$ is commutative, i.e.\ a Steiner loop, then every cyclic triple occurs together with its reverse.
The first alternative in Proposition~\ref{prop:pasch} then states that $x$ and $y$ lie in a common triple with~$u$, and the second reduces to the Pasch configuration generated by two distinct triples through a Veblen point.

If $u\in V$ is a nuclear element of a Mendelsohn loop of order~$n$, then of the $(n-2)(n-3)$ ordered pairs of distinct elements $x$, $y\in V\setminus\{u\}$ exactly $n-2$ satisfy $y = xu$, and each of the remaining $(n-2)(n-4)$ pairs determines an oriented Pasch configuration as in Proposition~\ref{prop:pasch}, with distinct pairs determining distinct configurations, since no non-trivial automorphism of the configuration fixes~$u$.
Thus any two cyclic triples through a nuclear point either share their point set or extend to an oriented Pasch configuration.
This is a directed counterpart of the fact that the number of Pasch configurations through a fixed Veblen point of a Steiner triple system of order~$v$ is $(v-1)(v-3)/4$, see \cite[Lemma~3.11]{FFG}.
In the commutative case exactly four of the sixteen orientations of a Pasch configuration through a Veblen point are of the type in Proposition~\ref{prop:pasch} with $u$ in the position indicated there, namely the one displayed there and those obtained from it by reversing the two cyclic triples through~$u$, the two not through~$u$, or all four.
This explains the factor of four between the two counts.

\section{Existence}
\label{sec:existence}

\begin{lemma}\label{lem:loop}
Let $Q$ be a semisymmetric quasigroup.
If the nucleus of~$Q$ is non-empty, then $Q$ is a loop.
\end{lemma}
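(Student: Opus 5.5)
By Proposition~\ref{prop:nuclei}, if $N(Q)$ is non-empty then it is an elementary abelian 2-group. We take its identity element $e$ and show that $e$ is an identity element of all of~$Q$. The facts we need about $e$ are that $e\cdot e = e$, that $e \in N(Q)$, and that $e$ commutes with every element of~$Q$ (Lemma~\ref{lem:commut}). In a semisymmetric quasigroup, once $e\cdot x = x$ holds for all $x$, commutativity gives $x\cdot e = x$ as well, so $e$ is a two-sided identity and $Q$ is a loop.

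To prove $e\cdot x = x$, fix $x\in Q$. Since $e \in N_\lambda(Q)$, we have $e\cdot(e\cdot x) = (e\cdot e)\cdot x = e\cdot x$. Left translation $L_e$ is a bijection because $Q$ is a quasigroup (Lemma~\ref{lem:mirror}), so cancelling $e$ on the left gives $e\cdot x = x$.

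An alternative route avoids cancellation. Lemma~\ref{lem:commut} shows $u\cdot ux = x$ for every nuclear $u$. Taking $u=e$ gives $e\cdot(e\cdot x) = x$. Combined with $e\cdot(e\cdot x) = e\cdot x$ from left nuclearity, this yields $e\cdot x = x$ directly.

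No step here is a real obstacle; everything follows from results already established. The only point needing care is to use the idempotent element $e$ of the group $N(Q)$, which is what makes $(e\cdot e)\cdot x = e\cdot x$ available. Semisymmetry then turns the resulting identity into $e\cdot x = x$.
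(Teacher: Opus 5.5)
Your proof is correct and is essentially the paper's argument. The paper uses the same key identity $e\cdot ex = ee\cdot x = ex$ from left nuclearity and idempotence of $e$. It then cancels $e$ explicitly, writing $ex = (e\cdot ex)\cdot e = ex\cdot e = x$ by the semisymmetric law, since $R_e = L_e^{-1}$, and finishes with Lemma~\ref{lem:commut} to get $xe = x$. Your appeal to quasigroup cancellation and your alternative via $e\cdot ex = e\cdot xe = x$ (Lemma~\ref{lem:mirror}) do exactly the same work.
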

\begin{proof}
If $N(Q)$ is non-empty, then by Proposition~\ref{prop:nuclei} it is a group with an identity element~$e$.
We show that $e$ is in fact an identity element of the whole quasigroup~$Q$.
For any $x\in Q$ we have $ex = (e\cdot ex)\cdot e = (ee\cdot x)\cdot e = ex\cdot e = x$, where the second equality holds because $e\in N_\lambda(Q)$.
Using Lemma~\ref{lem:commut} we have $xe = ex = x$.
\end{proof}

\begin{theorem}\label{thm:empty}
A semisymmetric quasigroup of order~$n$ with an empty nucleus exists for all $n \geq 3$.
\end{theorem}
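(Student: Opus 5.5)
By Lemma~\ref{lem:loop}, it is enough to exhibit, for every $n\ge 3$, a semisymmetric quasigroup of order~$n$ that is not a loop. The plan is to write down one explicit family that covers all $n$ at once, rather than splitting into residue classes and appealing to Mendelsohn triple systems.

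\textbf{Construction.} On $\mathbb{Z}_n$ take $x\cdot y = -x-y$. It is semisymmetric, since
\[
xy\cdot x = -(-x-y)-x = y .
\]

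\textbf{Step 1: for $3\nmid n$ it is not a loop.} If $e$ were an identity, then $e\cdot x = -e-x = x$ for all $x$, so $2x=-e$ for all $x$. For $n\ge 3$ this is impossible: comparing $x=0$ and $x=1$ gives $2=0$. This argument in fact works for every $n\ge3$. The $3\nmid n$ case is singled out only because there the operation is also idempotent-free in a familiar way.

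\textbf{Step 2: check the arithmetic for all $n$.} The argument in Step~1 used nothing about $n$ beyond $n\ge 3$, where $2\ne 0$ in $\mathbb{Z}_n$. So $(\mathbb{Z}_n,\,x\cdot y=-x-y)$ is a semisymmetric quasigroup with no identity element. Lemma~\ref{lem:loop} then forces $N(Q)=\emptyset$, which proves Theorem~\ref{thm:empty}.

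\textbf{Alternative check.} One can also verify directly that $N(Q)$ is empty. For $u\in N_\lambda(Q)$ we need $u\cdot xy = ux\cdot y$, that is,
\[
-u+x+y = u+x-y ,
\]
which gives $2y=2u$ for all $y$. This fails for $n\ge 3$, so $N_\lambda(Q)=\emptyset$, and Proposition~\ref{prop:nuclei} gives $N(Q)=\emptyset$.

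\textbf{Main obstacle.} There is little difficulty here; the only real point is that $n\ge3$ is sharp. For $n=1$ and $n=2$ every semisymmetric quasigroup is the group $\mathbb{F}_2^k$ with $k=0$ or $1$, so its nucleus is non-empty.
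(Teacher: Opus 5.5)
Your proof is correct and is essentially the paper's proof. Both take the operation $x\cdot y=-x-y$ on $\mathbb{Z}_n$, check that it has no identity element, and conclude with Lemma~\ref{lem:loop}; you rule out an identity via $e\cdot x=x$, while the paper uses that $x\cdot x$ would have to be constant. Drop the aside in Step~1 about the case $3\nmid n$ being ``idempotent-free'': it is false, since $0\cdot 0=0$ for every $n$, and the argument does not use it.
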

\begin{proof}
For $n\geq 3$ let $Q = (\mathbb{Z}_n, \cdot)$ where $x\cdot y = -x - y$.
Then $(x\cdot y)\cdot x = y$, so $Q$ is a semisymmetric quasigroup.
If $Q$ were a loop with identity element~$e$, then $x\cdot x = (x\cdot e)\cdot x = e$ for every $x\in Q$, which is impossible, since $0\cdot 0 = 0$ and $1\cdot 1 = -2$ are distinct for $n\geq 3$.
Thus $N(Q)$ is empty by Lemma~\ref{lem:loop}.
\end{proof}

The quasigroup constructed in the proof of Theorem~\ref{thm:empty} is commutative, but non-commutative semisymmetric quasigroups with empty nucleus exist in abundance.
The nucleus of every non-trivial Mendelsohn quasigroup~$Q$ is also empty, since idempotency yields $uu = u \neq v = vv$ for any pair of distinct $u$, $v\in Q$.

In the remainder of this paper we will focus on semisymmetric quasigroups with a non-empty nucleus, i.e.\ Mendelsohn loops.
Khatirinejad et al.~\cite{OstergardMTS} have shown that the number of isomorphism classes of MTS($v$) is $1$, $1$, $0$, $4$, $20$, $241$, $9\,801\,188$ and $13\,710\,290\,116$ for $v = 3$, $4$, $6$, $7$, $9$, $10$, $12$ and $13$.
The overwhelming majority of the corresponding Mendelsohn loops have a trivial nucleus $N(Q)=\{e\}$.

With the help of a computer running the model builder Mace4~\cite{mace}, we can obtain a census of the nuclei of Mendelsohn loops of order up to~16.
Firstly, there are the projective Steiner loops of orders 2, 4, 8 and 16, where the nucleus is the entire loop.
Up to isomorphism there is one Mendelsohn loop of order 8 with nucleus of order~2, see Example~\ref{ex:8},
one Mendelsohn loop of order~10 with nucleus of order~2,
one of order 16 with nucleus of order~4 and
85 Mendelsohn loops of order 16 with nucleus of order~2.
The loop of order~10 is the direct product of the unique Mendelsohn loop of order~5 with $(\mathbb{F}_2,+)$ and the loop of order~16 with nucleus of order~4 is the direct product of the loop from Example~\ref{ex:8} with $(\mathbb{F}_2,+)$.
Indeed, the direct product of two Mendelsohn loops is a Mendelsohn loop and its nucleus is the direct product of their nuclei, as one verifies componentwise, so in each case the product has a nucleus of the required order and the identification follows from the uniqueness.
One of the 85 loops of order 16 with nucleus of order~2 is a Steiner loop corresponding to the Steiner triple system of order~15 with automorphism group of order~192, i.e.\ System \#~2 in~\cite{CR}.
All remaining Mendelsohn loops of order less than or equal to~16 have trivial nucleus.

\begin{proposition}
\label{prop:triv}
A Mendelsohn loop of order~$n$ with trivial nucleus exists if and only if $n\equiv 1$ or $2\pmod{3}$ and $n\not\in\{2,4,7\}$.
\end{proposition}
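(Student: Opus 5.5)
Necessity comes first. A Mendelsohn loop of order~$n$ exists only if $n\equiv 1$ or $2\pmod 3$ and $n\neq 7$, so it remains to rule out $n=2$ and $n=4$. For $n=2$ the loop is $\{e,x\}$ with $xx=e$, which is the group $(\mathbb{F}_2,+)$. For $n=4$ the associated MTS(3) is a single pair of cyclic triples $(a,b,c)$, $(a,c,b)$ on three points. The loop is therefore commutative, namely the Klein four-group. In both cases the loop is associative and $N(L)=L\neq\{e\}$. (Alternatively, the census in the paper already covers these small orders.)

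Sufficiency is the main work. We must exhibit, for every admissible $n\notin\{2,4,7\}$, an MTS($n-1$) whose loop has trivial nucleus. By Proposition~\ref{prop:pasch}, a point $u$ is nuclear exactly when every pair $x,y$ not of the form $y=xu$ completes the oriented Pasch configuration of type~2 through $u$. So it suffices to produce, for each point $u$, one ordered pair $x,y$ of distinct points in $V\setminus\{u\}$ with $y\neq xu$ for which this configuration fails. A cheaper sufficient condition comes from Lemma~\ref{lem:commut}: every nuclear element commutes with everything. Hence if every point $u\in V$ lies in some cyclic triple $(u,x,y)$ whose reverse $(u,y,x)$ is not a block, i.e.\ $ux\neq xu$, then $N(L)=\{e\}$. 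I would aim for this condition: \emph{every point lies on at least one triple that is not paired with its reverse}.

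Small orders are handled by the census above. It states that among Mendelsohn loops of order at most $16$ the only ones with non-trivial nucleus are the projective Steiner loops of orders $2,4,8,16$ together with a few specific loops. For each admissible order $n\le 16$ other than $2,4,7$, at least one Mendelsohn loop exists (MTS($v$) exists for $v=4,7,9,10,12,13,15$), and the census shows at least one of them has trivial nucleus. For $n=8$ and $n=16$ there are $4$ (resp.\ many) MTS(7) (resp.\ MTS(15)), and the census lists only finitely many nontrivial-nucleus exceptions, so trivial-nucleus ones exist. For general $v=n-1\ge 16$ with $v\equiv0,1\pmod 3$, I would start from any MTS($v$) $(V,\mathcal{B})$ and modify it locally. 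Points on which every triple is paired with its reverse form a ``Steiner-like'' part. I would use a standard recursive construction: take $v = v_1+v_2$ style or $3$-GDD based constructions with ingredients having no paired triples at all (MTS with no repeated underlying triples exist, e.g.\ cyclic constructions $(0,i,3i)$-type), ensuring every point is on an unpaired triple. Alternatively, and more simply, I would invoke the known result that a ``pure'' MTS($v$) (no triple occurring together with its reverse) exists for all $v\equiv0,1\pmod 3$ except $v=3,6$ and possibly a few small values. For a pure MTS every triple witnesses non-commutativity, so $N(L)=\{e\}$ immediately.

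The main obstacle is exactly this existence input for all large $v$, either citing the spectrum of pure Mendelsohn triple systems or building a direct recursive construction. I would try citing the pure MTS spectrum first. For any residual small $v$ where pure systems fail (e.g.\ $v=4$, $v=7$ gives $n=8$, where an MTS(7) that is not the Fano plane doubled suffices), I would fall back on the computer census.
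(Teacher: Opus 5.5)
Your final route is the paper's proof. Necessity comes from the MTS spectrum together with the fact that the loops of orders $2$ and $4$ are groups. Sufficiency cites the existence of pure MTS($v$), i.e.\ anticommutative Mendelsohn loops, and applies Lemma~\ref{lem:commut}; the paper does exactly this. Your hedging is unnecessary: the pure MTS spectrum is exactly $v\equiv 0,1\pmod 3$, $v\notin\{3,6\}$. A pure MTS(4) exists, since the unique MTS(4) is pure, and so does a pure MTS(7), e.g.\ the cyclic one over $\mathbb{Z}_7$ with base blocks $(0,1,3)$ and $(0,3,2)$. So neither the census fallback nor a recursive construction is needed.
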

\begin{proof}
A loop~$L$ is said to be \emph{anticommutative} if $xy=yx$ implies $x=y$ for any $x$, $y\in L\setminus\{e\}$.
By Lemma~\ref{lem:commut}, every anticommutative Mendelsohn loop of order different from~$2$ has trivial nucleus.
By \cite{puremts} an anticommutative Mendelsohn loop of order~$n$ exists if and only if $n\equiv 1$ or $2\pmod{3}$ and $n\not\in \{2,4,7\}$.
A Mendelsohn loop of order~$7$ does not exist and the unique Mendelsohn loops of orders~$2$ and~$4$ are associative.
\end{proof}

\begin{lemma}
Let $L$ be a Mendelsohn loop with nucleus of order~$n$ and let $x\in L\setminus N(L)$.
Then the result of adjoining $x$ to $N(L)$ is the subloop $N(L)\cup xN(L)$ of order~$2n$.
\end{lemma}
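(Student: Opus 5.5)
Write $N = N(L)$, and let $x \in L \setminus N$ be fixed. The plan is to show that $S = N \cup xN$ is closed under multiplication, that $|S| = 2|N| = 2n$, and that $S$ is the smallest subloop containing $N$ and $x$. Since every subloop containing $N$ and $x$ contains all products $xa$ with $a \in N$, it must contain $S$. So once $S$ is shown to be a subloop, it is the subloop generated by $N$ and $x$.

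For the cardinality, the map $a \mapsto xa$ is injective because $L$ is a quasigroup, so $|xN| = n$. The sets $N$ and $xN$ are disjoint: if $xa = b$ with $a, b \in N$, then by the semisymmetric law $x = ax\cdot a$. Lemma~\ref{lem:commut} gives $ax = xa = b$, so $x = ba \in N$, because $N$ is a subgroup by Proposition~\ref{prop:nuclei}. This contradicts $x \notin N$. Hence $|S| = 2n$.

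For closure, I will use that elements of $N$ are central and associate with everything (Proposition~\ref{prop:nuclei}), and that $N$ is an elementary abelian $2$-group containing $e$. Take $a, b \in N$ and check the four kinds of products.
\begin{itemize}
\item $ab \in N$, since $N$ is a subgroup.
\item $a\cdot xb = ax\cdot b = xa\cdot b = x\cdot ab \in xN$, using the nuclear identities and Lemma~\ref{lem:commut}.
\item $xa\cdot b = x\cdot ab \in xN$.
\item $xa\cdot xb$: write $xa = ax$ and pull the nuclear elements out to get $a\cdot(x\cdot xb) = a\cdot(xx\cdot b) = a\cdot(e b) = ab \in N$. Here $xx = e$ holds in a Mendelsohn loop, and the middle equality uses $b \in N_\rho$ in the form $x\cdot xb = x\cdot bx = xb\cdot x$ together with $xb\cdot x = b$ from the semisymmetric law. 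I will write this case out carefully, choosing which nucleus to apply at each step.
\end{itemize}

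A finite subset of a quasigroup that is closed under multiplication and contains $e$ is a subloop, because left and right translations restricted to it are injective and hence bijective. This completes the plan. The only delicate point is the last product $xa\cdot xb$, which needs the nuclear identities applied in the right order; everything else follows directly from Proposition~\ref{prop:nuclei} and Lemma~\ref{lem:commut}.
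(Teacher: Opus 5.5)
Your proof is correct and follows essentially the same route as the paper. Closure comes from centrality and the nuclear identities, and $|N\cup xN| = 2n$ comes from injectivity of $a\mapsto xa$ together with the observation that $xa\in N$ would force $x\in N$. The paper computes the last product in one step as $xa\cdot xb = (xa\cdot x)\cdot b = ab$. It needs no finiteness argument, because in a semisymmetric quasigroup the divisions are themselves products ($y\backslash x = x\cdot y = y/x$), so closure under multiplication already yields a subloop.
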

\begin{proof}
First, let us verify that the set $N(L)\cup xN(L)$ is closed under the binary operation.
For any $u$, $v\in N(L)$ we have $u\cdot xv = xv\cdot u = x\cdot vu \in xN(L)$ and $xu\cdot xv = (xu\cdot x)\cdot v = uv \in N(L)$.

The order of $xN(L)$ is~$n$ and $N(L)\cap xN(L) = \emptyset$.
If the intersection were not empty, there would exist $u$, $v\in N(L)$, such that $u=xv$, implying that $x\in N(L)$, which is a contradiction.
\end{proof}

\begin{lemma}\label{lem:nonex}
There exists no Mendelsohn loop of order~$n$ with nucleus of order~$n/2$.
\end{lemma}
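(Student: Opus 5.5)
The plan is to show that if $N=N(L)$ has index $2$ in $L$, then the multiplication of $L$ is completely determined by that of $N$, and that it coincides with the group operation of $N\times\mathbb{F}_2$. Then $L$ is associative, so $N(L)=L$, which contradicts $|N(L)|=n/2<n$.

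First, suppose $|N|=n/2$ and fix $x\in L\setminus N$. By the preceding lemma, $N\cup xN$ is a subloop of order $2\cdot n/2=n$, so $L=N\cup xN$ as a disjoint union. Every element is therefore uniquely of the form $u$ or $xu$ with $u\in N$. By Proposition~\ref{prop:nuclei}, $N$ is an elementary abelian $2$-group, every $u\in N$ lies in $N_\lambda$, $N_\mu$ and $N_\rho$, and $u$ commutes with everything.

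Next, compute the four types of products for $u,v\in N$:
\begin{itemize}
\item $u\cdot v=uv$, by definition.
\item $xu\cdot v=x\cdot uv$, since $v\in N_\rho(L)$.
\item $u\cdot xv=ux\cdot v=xu\cdot v=x\cdot uv$, using $u\in N_\mu(L)$, then commutativity of $u$, then the previous case.
\item $xu\cdot xv=(xu\cdot x)\cdot v=uv$, using $v\in N_\rho(L)$ and the semisymmetric law $(xu)\cdot x=u$.
\end{itemize}
These identities were partly already obtained in the proof of the preceding lemma.

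Now define $\varphi\colon L\to N\times\mathbb{F}_2$ by $u\mapsto(u,0)$ and $xu\mapsto(u,1)$. This map is a bijection by the disjointness noted above. The four product formulas show that $\varphi$ is a homomorphism onto the direct product of the groups $N$ and $(\mathbb{F}_2,+)$. Hence $L$ is a group, and in particular it is associative. Then $N(L)=L$ has order $n$, contradicting $|N(L)|=n/2$. Equivalently, this contradicts $x\notin N$.

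The only delicate step is choosing the correct nucleus property in each product computation, so that the second factor can always be moved across. Proposition~\ref{prop:nuclei} guarantees that all three nuclei are available, so I expect no real obstacle. The argument is essentially a consequence of index $2$ forcing the Schreier-type extension of $N$ to split trivially.
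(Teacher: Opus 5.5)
Your proof is correct, but it reaches the contradiction by a different route from the paper.

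The paper never computes the multiplication table. It writes an arbitrary $z\notin N(L)$ as $z=xu$ with $u\in N(L)$. Then a single chain, $xy\cdot xu=(xy\cdot x)\cdot u=(x\cdot yx)\cdot u=x\cdot(yx\cdot u)=x\cdot(y\cdot xu)$, shows that the chosen $x$ satisfies the left-nucleus identity; the case $z\in N(L)$ is immediate. This uses only $u\in N_\rho(L)$ and the two forms $xy\cdot x=y=x\cdot yx$ of semisymmetry, so $x\in N_\lambda(L)$ directly.

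You instead determine the whole loop. Your four product formulas give $L\cong N\times\mathbb{F}_2$, so $L$ is a group and $N(L)=L$. This costs a small case analysis and uses the centrality of nuclear elements. In exchange it explains \emph{why} index $2$ is impossible: the index-$2$ extension of $N(L)$ is forced to be the direct product. In the language of Proposition~\ref{prop:complete}, conditions~\eqref{eq:phicond} with $K=L/N(L)=\{e,a\}$ force $\varphi(a,a)=\varphi(aa,a)=\varphi(e,a)=0$, so the factor system vanishes.

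One small slip: the equality $u\cdot xv=ux\cdot v$ is an instance of $u\in N_\lambda(L)$ (or of $v\in N_\rho(L)$), not of $u\in N_\mu(L)$. This is harmless, since Proposition~\ref{prop:nuclei} puts $u$ in all three nuclei.
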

\begin{proof}
Assume to the contrary that there exists a Mendelsohn loop~$L$ of order~$n$ with nucleus of order~$n/2$ and let $x\in L\setminus N(L)$.
Then by the previous lemma $L = N(L)\cup xN(L)$ and for any $z\in L\setminus N(L)$ there exists $u\in N(L)$ such that $z = xu$.
Now for any $y\in L$ we have
\[
  xy \cdot z = xy \cdot xu = (xy\cdot x) \cdot u = (x\cdot yx) \cdot u = x\cdot (yx\cdot u) = x\cdot (y\cdot xu) = x\cdot yz,
\]
proving that $x\in N_\lambda(L)$, which is a contradiction.
\end{proof}

The validity of the previous lemma can also be seen from Lemma~\ref{lem:lcc:asoc} and the fact that a loop with nucleus of index two is conjugacy closed, see \cite{goodaire} or~\cite{drapal}.

\begin{example}\label{ex:8}
The following Cayley table shows a Mendelsohn loop of order~$8$ with nucleus~$\{0,4\}$.
\begin{center}
\begin{tabular}{cccccccc}
0 & 1 & 2 & 3 & 4 & 5 & 6 & 7\\
1 & 0 & 7 & 2 & 5 & 4 & 3 & 6\\
2 & 3 & 0 & 5 & 6 & 7 & 4 & 1\\
3 & 6 & 1 & 0 & 7 & 2 & 5 & 4\\
4 & 5 & 6 & 7 & 0 & 1 & 2 & 3\\
5 & 4 & 3 & 6 & 1 & 0 & 7 & 2\\
6 & 7 & 4 & 1 & 2 & 3 & 0 & 5\\
7 & 2 & 5 & 4 & 3 & 6 & 1 & 0\\
\end{tabular}
\end{center}
This loop is used in the proof of Theorem~\ref{thm:nucleus} to handle the case $n/m = 4$.
\end{example}

A subloop~$K$ of~$L$ is said to be \emph{normal} in~$L$ if $xK=Kx$, $x(yK)=(xy)K$ and $(xK)y=x(Ky)$ for all $x$, $y\in L$.
The \emph{factor loop} $L/K$ is then defined in the usual way.
It follows from Lemma~\ref{lem:commut} and the defining identities of the nuclei that if $L$ is a Mendelsohn loop, then $N(L)$ is normal in~$L$.

\begin{theorem}\label{thm:nucleus}
Let $n = 2^k\eta$ be a positive integer, where $\eta$ is odd.
A Mendelsohn loop of order~$n$ with nucleus of order~$m$ exists if and only if $n\equiv 1$ or $2\pmod{3}$, $n\neq 7m$, and
\begin{enumerate}
\item[(a)] $\eta = 1$, $(n,m)\neq (4,1)$ and $m=2^i$, where $i\in\{0,1,\dots,k-2\}\cup\{k\}$, or
\item[(b)] $\eta > 1$ and $m=2^i$, where $i\in\{0,1,\dots,k\}$.
\end{enumerate}
\end{theorem}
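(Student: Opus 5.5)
The plan is to prove necessity by passing to the factor loop $L/N(L)$, and sufficiency by taking direct products with elementary abelian $2$-groups, patching the single gap with Example~\ref{ex:8}.

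\textbf{Necessity.} Let $L$ be a Mendelsohn loop of order~$n$ with nucleus $N=N(L)$ of order~$m$.
\begin{enumerate}
\item[(i)] A Mendelsohn loop of order~$n$ exists, so $n\equiv 1$ or $2\pmod 3$.
\item[(ii)] By Proposition~\ref{prop:nuclei}, $N$ is an elementary abelian $2$-group, so $m=2^i$ for some~$i$.
\item[(iii)] $N$ is normal, as remarked before the theorem. Each coset $xN$ has exactly $m$ elements, since $L_x$ is a bijection, and the cosets partition~$L$. Hence $m\mid n$ and $i\le k$.
\item[(iv)] The factor loop $L/N$ is a Mendelsohn loop of order $n/m$. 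Its identity is~$N$, and the semisymmetric law $(xN\cdot yN)\cdot xN=yN$ is inherited from~$L$ via the normality identities.
\item[(v)] Since no Mendelsohn loop of order~$7$ exists, $n/m\neq 7$, i.e.\ $n\neq 7m$.
\item[(vi)] Lemma~\ref{lem:nonex} gives $n/m\neq 2$. When $\eta=1$ this excludes exactly $i=k-1$. When $\eta>1$ the quotient $n/m$ has an odd factor $\eta>1$, so it is never~$2$ and no exponent is excluded.
\item[(vii)] The unique Mendelsohn loop of order~$4$ is associative, so its nucleus has order~$4$. This rules out $(n,m)=(4,1)$.
\end{enumerate}
I would double-check that nothing else is forced. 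For instance, when $\eta>1$ and $i=k$ the quotient has odd order~$\eta$. Here $\eta\neq 3$ because $n\not\equiv 0\pmod 3$, and $\eta\neq 7$ because $n\neq 7m$.

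\textbf{Sufficiency.} The construction is $L=M\times(\mathbb{F}_2^i,+)$, where $M$ is a Mendelsohn loop of order $n/m$ with trivial nucleus. By the componentwise remark in the census discussion, $L$ is a Mendelsohn loop of order~$n$ with nucleus $\{e\}\times\mathbb{F}_2^i$ of order~$m$.
\begin{enumerate}
\item[(i)] Such an $M$ is supplied by Proposition~\ref{prop:triv} provided $n/m\equiv 1$ or $2\pmod 3$ and $n/m\notin\{2,4,7\}$.
\item[(ii)] The congruence holds because $2$ is invertible modulo~$3$, so dividing $n$ by $2^i$ preserves $n\not\equiv 0\pmod 3$.
\item[(iii)] The exclusion $n/m\neq 7$ is the hypothesis $n\neq 7m$. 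The exclusion $n/m\neq 2$ holds by the restriction on~$i$ in~(a), and automatically in~(b).
\item[(iv)] When $m=n$ (so $\eta=1$, $i=k$), take $L=\mathbb{F}_2^k$ directly.
\item[(v)] The remaining case is $n/m=4$, which forces $\eta=1$ and $i=k-2$. If $i\ge 1$, take $L$ to be the loop of Example~\ref{ex:8} times $\mathbb{F}_2^{i-1}$. This has order $8\cdot 2^{i-1}=n$ and nucleus of order $2\cdot 2^{i-1}=m$. If $i=0$ then $(n,m)=(4,1)$, which is excluded.
\end{enumerate}

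The main obstacle is the necessity step~(iv): checking carefully that $L/N$ is well defined and semisymmetric, and that the coset partition argument holds for a non-associative loop. Everything after that is bookkeeping against Proposition~\ref{prop:triv} and Lemma~\ref{lem:nonex}.
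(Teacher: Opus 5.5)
Your proposal is correct and follows essentially the same route as the paper. For necessity you use the factor loop $L/N(L)$ to get $n/m\neq 7$, Proposition~\ref{prop:nuclei} for $m=2^i$, Lemma~\ref{lem:nonex} for $n/m\neq 2$, and the associativity of the order-$4$ loop to exclude $(4,1)$. For sufficiency you take $K\times\mathbb{F}_2^i$ with $K$ from Proposition~\ref{prop:triv}, and use Example~\ref{ex:8} times $\mathbb{F}_2^{i-1}$ when $n/m=4$. You are only more explicit than the paper in justifying $m\mid n$ via the coset partition, and in treating $m=n$ separately, which the paper absorbs into Proposition~\ref{prop:triv} with the trivial loop of order~$1$.
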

\begin{proof}
Let $L$ be a Mendelsohn loop.
Since the factor loop $L/N(L)$ is also a Mendelsohn loop, we have $n/m\neq 7$.
By Proposition~\ref{prop:nuclei} the nucleus $N(L)$ is an abelian 2-group and thus is isomorphic to $(\mathbb{F}_2^i,+)$ for some~$i$.
The necessity of the remaining conditions follows from Lemma~\ref{lem:nonex} and from the fact that the unique Mendelsohn loop of order~$4$ is associative and hence has nucleus equal to the entire loop.

Let $n$ and~$m=2^i$ be integers satisfying the conditions given in the statement of the theorem.
Then $n/m\equiv 1$ or $2\pmod{3}$ and $n/m \not\in \{2,7\}$.
If $n/m\neq 4$, then by Proposition~\ref{prop:triv} there exists a Mendelsohn loop~$K$ of order~$n/m$ with trivial nucleus.
The loop~$K\times\mathbb{F}_2^i$ of order~$n$ satisfies the semisymmetric identity and has nucleus~$\{e\}\times\mathbb{F}_2^i$ of order~$m$.
If $n/m=4$, then instead let $K$ be the Mendelsohn loop of order~$8$ with nucleus of order~$2$ that is given in Example~\ref{ex:8}.
The loop~$K\times\mathbb{F}_2^{i-1}$ is a Mendelsohn loop of order~$n$ with nucleus~$N(K)\times\mathbb{F}_2^{i-1}$ of order~$m$.
\end{proof}

\begin{corollary}
A Mendelsohn loop of order~$n$ with non-trivial nucleus exists if and only if $n\equiv 2$ or $4\pmod{6}$ and $n\neq 14$.
\end{corollary}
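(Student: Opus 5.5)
The corollary should follow from Theorem~\ref{thm:nucleus} by taking the union over all $m=2^i$ with $i\geq 1$. The plan is to describe, for each $n$, exactly when some admissible $m\geq 2$ exists, and to compare the answer with the congruence $n\equiv 2$ or $4\pmod 6$.

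\emph{Necessity.} Suppose a Mendelsohn loop of order~$n$ has non-trivial nucleus. By Proposition~\ref{prop:nuclei} the nucleus is a group of order $m=2^i$ with $i\geq 1$. By Lagrange's theorem for the normal subloop $N(L)$, equivalently by the condition $m=2^i$ with $i\leq k$ in the theorem, $n$ is even. Combined with $n\equiv 1$ or $2\pmod 3$, this gives $n\equiv 4$ or $2\pmod 6$. Next we must exclude $n=14$, so that $k=1$ and $\eta=7$. Then $m=2$ is forced, and $n=7m$ is prohibited by the theorem.

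\emph{Sufficiency.} Let $n\equiv 2$ or $4\pmod 6$ with $n\neq 14$, so $k\geq 1$ and $n\equiv 1$ or $2\pmod 3$. We exhibit an admissible $m\geq 2$ in two cases.
\begin{itemize}
\item If $\eta=1$, then $n=2^k$ and we take $m=n$, i.e.\ $i=k$. This is allowed in case~(a). It avoids $(4,1)$ since $m\neq 1$, and $n\neq 7n$ holds trivially.
\item If $\eta>1$, we are in case~(b) and may take $m=2$. The only obstruction is $n=14$, which is excluded. The other requirement, $n\equiv 1$ or $2\pmod 3$, holds by hypothesis.
\end{itemize}
If one preferred $m=2$ also when $n=2^k$ (so $k\geq 3$, avoiding $k-1$), that would be unnecessary; $m=n$ suffices.

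There is no real obstacle here. The only care needed is to check that the excluded cases $(4,1)$ and $n=7m$ interact with $m\geq 2$ only at $n=14$.
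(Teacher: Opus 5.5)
Your argument is correct and matches the paper's approach: the paper gives no separate proof and treats the corollary as the immediate specialization of Theorem~\ref{thm:nucleus} to $m=2^i$ with $i\ge 1$, which is exactly what you carry out, with necessity from $1\le i\le k$ and $n\neq 7m$ at $n=14$, and sufficiency from $m=n$ when $\eta=1$ and $m=2$ when $\eta>1$. One small wording point concerns your closing remark that $n=7m$ interacts with $m\ge 2$ only at $n=14$, which is not accurate as stated, since the condition also excludes pairs such as $(n,m)=(28,4)$; it should say that $n=14$ is the only order at which every $m\ge 2$ is excluded, and this is what your choice $m=2$ actually relies on.
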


It is interesting to note that the loop given in Example~\ref{ex:8} is a special case of a more general construction of Mendelsohn loops with non-trivial nucleus.

\begin{proposition}
\label{prop:phi}
Let $L$ be a Mendelsohn loop and $\varphi: L \times L\to \mathbb{F}_2^i$.
Define a binary operation $*_\varphi$ on the set $L\times\mathbb{F}_2^i$ as
\[
  (x,u) *_\varphi (y,v) = (x y,\, u + v + \varphi(x,y)).
\]
Then $L_\varphi = (L\times\mathbb{F}_2^i,*_\varphi)$ is a Mendelsohn loop if and only if
\begin{equation}\label{eq:phicond}
\tag{$\Phi$}
\varphi(x,y) = \varphi(x y, x) \quad\text{and}\quad \varphi(x,e) = \varphi(e,y) \quad\text{for all $x$, $y\in L$.}
\end{equation}
\end{proposition}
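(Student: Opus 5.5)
A Mendelsohn loop is a set with an element $e$ satisfying the two identities $x\cdot e = x$ and $xy\cdot x = y$. So we need to determine exactly when some element of $L\times\mathbb{F}_2^i$ is a right identity and the semisymmetric law holds for $*_\varphi$. We compute both sides in the first and second coordinates separately; the first coordinates always agree because $L$ is itself a Mendelsohn loop, so everything reduces to identities in $\mathbb{F}_2^i$.

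\emph{Semisymmetric law.} We compute
\[
((x,u)*_\varphi(y,v))*_\varphi(x,u) = \bigl(xy\cdot x,\; u+v+\varphi(x,y)+u+\varphi(xy,x)\bigr) = \bigl(y,\; v+\varphi(x,y)+\varphi(xy,x)\bigr),
\]
using $u+u=0$ in characteristic~2. Hence the semisymmetric law holds for $*_\varphi$ if and only if $\varphi(x,y)+\varphi(xy,x)=0$, that is, $\varphi(x,y)=\varphi(xy,x)$, for all $x,y\in L$.

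\emph{Right identity, sufficiency.} Assume the first condition holds and $\varphi(x,e)=\varphi(e,y)$ for all $x,y$. Setting $x=y=e$ shows the common value is $c:=\varphi(e,e)$. Take $(e,c)$ as the candidate identity. Then $(x,u)*_\varphi(e,c)=(x,\,u+c+\varphi(x,e))=(x,u)$, since $\varphi(x,e)=c$. So both defining identities hold and $L_\varphi$ is a Mendelsohn loop. (The identity is not $(e,0)$ unless $c=0$.)

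\emph{Necessity.} Suppose $L_\varphi$ is a Mendelsohn loop with identity $(e',c)$. From $(x,u)*_\varphi(e',c)=(x,u)$ we get $xe'=x$ in $L$, so $e'=e$, and then $\varphi(x,e)=c$ for every $x$. The semisymmetric computation above gives the first condition. Substituting $y=e$ into it gives $c=\varphi(x,e)=\varphi(x,x)$. We still need $\varphi(e,y)=c$. Apply the first condition with $x=e$: $\varphi(e,y)=\varphi(y,e)=c$. So condition~\eqref{eq:phicond} holds in full.

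The main subtlety is this last step: the identity element of $L_\varphi$ need not be $(e,0)$, so necessity must use $\varphi(x,e)=c$ together with the first identity to recover $\varphi(e,y)=c$, rather than assuming $c=0$. I expect no other obstacles; the rest is bookkeeping in $\mathbb{F}_2^i$.
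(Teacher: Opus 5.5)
Your proof is correct and follows the paper's argument: the same coordinatewise reduction of the semisymmetric law to $\varphi(x,y)=\varphi(xy,x)$, and the same choice of $(e,c)$ as the identity for sufficiency. The only variation is in necessity. The paper reads off $\varphi(e,y)=h$ from the left-identity equation $(g,h)*_\varphi(x,u)=(x,u)$, which is legitimate since the identity of a Mendelsohn loop is two-sided. You instead derive it from the right-identity equation together with $\varphi(e,y)=\varphi(y,e)$, which shows that, given the first condition, the second just says $\varphi(x,e)$ is independent of $x$. Your intermediate step $\varphi(x,x)=c$ is not needed.
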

\begin{proof}
For $*_\varphi$ the semisymmetric law $((x,u) *_\varphi (y,v)) *_\varphi (x,u) = (y,v)$ can be rewritten as
\[
  (xy \cdot x,\, v + \varphi(x,y) + \varphi(xy,x)) = (y,v),
\]
which is equivalent to the condition that $\varphi(x, y) = \varphi(xy, x)$ for all $x,y\in L$.

Assume that $L_\varphi$ is a loop and $(g,h)$ is its identity element.
Then $(x,u) = (x,u) *_\varphi (g,h) = (xg,\, u + h + \varphi(x,g))$, thus $g$ is the identity element of~$L$ and $\varphi(x,g) = h$ for all $x\in L$.
Similarly for $(g,h) *_\varphi (x,u)$ we find that $\varphi(g,x) = h$ for all $x\in L$.
Thus $\varphi(x,e) = \varphi(e,y)$ for all $x$, $y\in L$.

If there exists $h\in \mathbb{F}_2^i$ such that $\varphi(x,e) = h = \varphi(e,x)$ for all $x\in L$, then $(e,h)$ is an identity element in~$L_\varphi$, as one can easily verify.
\end{proof}
If $\varphi$ satisfies conditions~\eqref{eq:phicond} with $\varphi(x,e) = h = \varphi(e,y)$, then the mapping $(x,u)\mapsto(x,u+h)$ is an isomorphism of~$L_\varphi$ onto~$L_{\varphi'}$, where $\varphi'(x,y) = \varphi(x,y)+h$ again satisfies conditions~\eqref{eq:phicond} and $\varphi'(x,e) = 0 = \varphi'(e,y)$.
There is therefore no loss of generality in assuming that $\varphi(x,e) = 0 = \varphi(e,y)$.
For $h = 0$, the mapping~$\varphi$ is a factor system of the extension~$L_\varphi$ in the terminology of the theory of Schreier extensions, cf.~\cite{FFG,strambachstuhl}, and conditions~\eqref{eq:phicond} single out the factor systems for which the extension is semisymmetric.

Let $(V,\mathcal{B})$ be the Mendelsohn triple system associated with~$L$.
Each cyclic triple in~$\mathcal{B}$ is of the form $(x,y,xy)$ for some $x$, $y\in V$ and contains the ordered pairs $(x,y)$, $(y,xy)$ and $(xy,x)$.
The condition $\varphi(x,y) = \varphi(xy,x)$ thus states that $\varphi$ is constant on the ordered pairs of each cyclic triple of~$\mathcal{B}$.
For the remaining ordered pairs $(x,y)\in L\times L$, those with $e\in\{x,y\}$ or $x = y$, the two conditions of~\eqref{eq:phicond} force $\varphi$ to take on a single value~$h$, since $\varphi(x,x) = \varphi(xx,x) = \varphi(e,x) = h$.
Consequently, under the normalization $h = 0$ assumed above, the mappings $\varphi$ satisfying conditions~\eqref{eq:phicond} are precisely those obtained from an arbitrary mapping $f:\mathcal{B}\to\mathbb{F}_2^i$ by setting $\varphi(x,y) = f((x,y,xy))$ for distinct $x$, $y\in V$ and $\varphi(x,y) = 0$ otherwise.

\begin{proposition}\label{prop:nucchar}
Let $L$ be a Mendelsohn loop and let $\varphi: L\times L\to\mathbb{F}_2^i$ satisfy conditions~\eqref{eq:phicond}.
Then $(x,u)\in N(L_\varphi)$ if and only if $x\in N(L)$ and
\[
  \varphi(x,y) + \varphi(xy,z) = \varphi(y,z) + \varphi(x,yz)
  \quad\text{for all $y$, $z\in L$.}
\]
In particular $\{e\}\times\mathbb{F}_2^i\subseteq N(L_\varphi)$, and if $N(L)$ is trivial, then $N(L_\varphi) = \{e\}\times\mathbb{F}_2^i$.
\end{proposition}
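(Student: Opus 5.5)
By Proposition~\ref{prop:phi}, $L_\varphi$ is a Mendelsohn loop, so Proposition~\ref{prop:nuclei} applies: $N(L_\varphi) = N_\lambda(L_\varphi)$. The plan is therefore to characterize membership in the left nucleus by a direct componentwise computation. For $(x,u)$, $(y,v)$, $(z,w)\in L\times\mathbb{F}_2^i$ I will expand both sides of the left nuclear identity:
\[
  (x,u) *_\varphi ((y,v) *_\varphi (z,w)) = (x\cdot yz,\, u+v+w+\varphi(y,z)+\varphi(x,yz)),
\]
\[
  ((x,u) *_\varphi (y,v)) *_\varphi (z,w) = (xy\cdot z,\, u+v+w+\varphi(x,y)+\varphi(xy,z)).
\]
The coordinates $u,v,w$ cancel from the second components. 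Hence $(x,u)\in N_\lambda(L_\varphi)$ holds exactly when two conditions are met for all $y,z\in L$. The first is $x\cdot yz = xy\cdot z$, i.e.\ $x\in N_\lambda(L) = N(L)$. The second is the displayed cocycle identity, since $v,w$ are arbitrary and only $y,z$ remain free. This proves the equivalence. The only point needing care is invoking Proposition~\ref{prop:nuclei} on both $L$ and $L_\varphi$, so that it suffices to check the left nucleus in each.

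For the ``in particular'' part, take $x=e$. Clearly $e\in N(L)$. It then suffices to show
\[
  \varphi(e,y)+\varphi(y,z) = \varphi(y,z)+\varphi(e,yz),
\]
which reduces to $\varphi(e,y)=\varphi(e,yz)$. This follows from \eqref{eq:phicond}, which makes $\varphi(e,\cdot)$ constant, equal to $h$. Thus $\{e\}\times\mathbb{F}_2^i\subseteq N(L_\varphi)$.

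Finally, suppose $N(L)=\{e\}$. By the first part, any $(x,u)\in N(L_\varphi)$ has $x\in N(L)$, so $x=e$. Combined with the inclusion just shown, this gives $N(L_\varphi)=\{e\}\times\mathbb{F}_2^i$.

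No step is a real obstacle. The main thing to get right is the reduction to a single one-sided nucleus via Proposition~\ref{prop:nuclei}. I chose the left nucleus because its identity matches the form of the stated condition exactly, which avoids having to translate between variants of the condition.
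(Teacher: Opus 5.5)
Your proposal is correct and matches the paper's proof: you reduce to the left nucleus via Proposition~\ref{prop:nuclei}, compare the two coordinates of the triple products to get $x\in N_\lambda(L)=N(L)$ together with the cocycle identity, and settle the case $x=e$ using the constancy of $\varphi(e,\cdot)$ from \eqref{eq:phicond}. Your explicit final step deducing $N(L_\varphi)=\{e\}\times\mathbb{F}_2^i$ when $N(L)$ is trivial is the immediate consequence that the paper leaves unstated.
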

\begin{proof}
By Proposition~\ref{prop:nuclei} we have $(x,u)\in N(L_\varphi)$ if and only if $(x,u)\in N_\lambda(L_\varphi)$.
Comparing the two coordinates of $((x,u) *_\varphi (y,v)) *_\varphi (z,w)$ and $(x,u) *_\varphi ((y,v) *_\varphi (z,w))$ shows that this holds if and only if $xy\cdot z = x\cdot yz$ and $\varphi(x,y) + \varphi(xy,z) = \varphi(y,z) + \varphi(x,yz)$ for all $y$, $z\in L$.
The former condition states that $x\in N_\lambda(L) = N(L)$.
For $x = e$ the latter condition reduces to $\varphi(e,y) = \varphi(e,yz)$, which holds by~\eqref{eq:phicond}.
\end{proof}

Since the criterion in Proposition~\ref{prop:nucchar} does not involve~$u$, the nucleus of~$L_\varphi$ is a union of cosets of $\{e\}\times\mathbb{F}_2^i$.
In the commutative case of Steiner loops this criterion appears in~\cite[Remark~5.1]{FFG}, where it characterizes the Veblen points of the extension.

\begin{proposition}\label{prop:nucexact}
Let $L$ be a Mendelsohn loop of order at least~4, $h\in\mathbb{F}_2^i$, and $\varphi: L \times L\to \mathbb{F}_2^i$ a mapping such that for all distinct $x$, $y\in L\setminus\{e\}$ with $xy = yx$,
\[
  \varphi(x,y) = \varphi(xy,x) = \varphi(y,xy) \neq \varphi(y,x) = \varphi(yx,y) = \varphi(x,yx),
\]
and $\varphi(x,y) = h$ otherwise, i.e.\ if $xy \neq yx$, $e\in\{x,y\}$ or $x = y$.
Then $L_\varphi$ is a Mendelsohn loop with nucleus $\{e\}\times\mathbb{F}_2^i$.
\end{proposition}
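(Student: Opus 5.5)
The plan is to prove the two claims separately. First I check that $\varphi$ satisfies conditions~\eqref{eq:phicond}, so that $L_\varphi$ is a Mendelsohn loop by Proposition~\ref{prop:phi}. Then I use Proposition~\ref{prop:nucchar} to show that no $(x,u)$ with $x\neq e$ is nuclear. Implicitly $i\geq 1$, since otherwise the required inequality cannot hold.

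\textbf{Step 1: $\varphi$ is well defined and satisfies~\eqref{eq:phicond}.} The key observation concerns a cyclic triple $(x,y,xy)$ of the associated MTS. Either all three ordered pairs in it commute, in which case the reversed triple $(y,x,xy)$ is also in $\mathcal{B}$, or none of them commute. Indeed, if $x$ and $xy$ commute, then $x\cdot xy = xy\cdot x = y$, so $(x,xy,y)\in\mathcal{B}$, and this is the reversal of $(x,y,xy)$. Hence the hypothesis amounts to the following assignment. Each unordered "symmetric" block $\{x,y,w\}$, with both orientations present, gets two distinct values, one for each orientation. Every other ordered pair gets the value $h$. This assignment is consistent.

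The identity $\varphi(x,y)=\varphi(xy,x)$ then holds case by case:
\begin{itemize}
\item on symmetric triples it is built into the hypothesis;
\item on non-commuting triples all three pairs take the value $h$;
\item for $x=y$ we get $\varphi(x,x)=h=\varphi(e,x)$;
\item for pairs involving $e$ both sides equal $h$.
\end{itemize}
Finally $\varphi(x,e)=h=\varphi(e,y)$. So~\eqref{eq:phicond} holds and $L_\varphi$ is a Mendelsohn loop.

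\textbf{Step 2: the nucleus.} By Proposition~\ref{prop:nucchar}, $\{e\}\times\mathbb{F}_2^i\subseteq N(L_\varphi)$. It remains to show that for $x\in N(L)$ with $x\neq e$ the cocycle condition fails for some $y,z$. By Lemma~\ref{lem:commut}, $x$ commutes with every element.

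Since $|L|\geq 4$, I pick $y\notin\{e,x\}$ and take $z=y$. The right-hand side of the condition becomes
\[
\varphi(y,y)+\varphi(x,e)=h+h=0,
\]
so the condition reads $\varphi(x,y)=\varphi(xy,y)$.

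Put $w=xy=yx$. Then $w\notin\{e,x,y\}$, and both $(x,y,w)$ and $(y,x,w)$ lie in $\mathcal{B}$. Reading off these triples gives $yw=x$ and $wy=x$, so $y$ and $w$ are distinct commuting non-identity elements. By the hypothesis applied to the commuting pair $x,y$, we have $\varphi(x,y)=\varphi(y,xy)=\varphi(y,w)$. By the hypothesis applied to the commuting pair $y,w$, we have $\varphi(y,w)\neq\varphi(w,y)=\varphi(xy,y)$.

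Thus the cocycle condition fails, so no $(x,u)$ with $x\neq e$ is nuclear, and $N(L_\varphi)=\{e\}\times\mathbb{F}_2^i$.

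\textbf{Main obstacle.} The delicate part is Step 1: one must confirm that the hypothesis does not impose contradictory values on overlapping pairs. This rests on the dichotomy "all pairs of a block commute, or none do", so I would verify that dichotomy carefully first. Step 2 is then a short computation.
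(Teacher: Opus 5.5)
Your proof is correct. Step 1 matches the paper. The paper also reduces the hypothesis to the statement that $\varphi$ is constant on each cyclic triple of commuting elements and equal to $h$ elsewhere, and then cites the discussion after Proposition~\ref{prop:phi}. Your dichotomy (either all three pairs of a block commute or none do) makes explicit a point the paper leaves implicit.

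Step 2 takes a different route. The paper does not use Proposition~\ref{prop:nucchar} to exclude the remaining elements. It applies Lemma~\ref{lem:commut} to $L_\varphi$ itself: for every $x\neq e$ and any $y\notin\{e,x\}$, the elements $(x,u)$ and $(y,v)$ do not commute, because either $xy\neq yx$ or $\varphi(x,y)\neq\varphi(y,x)$. This treats all $x\neq e$ uniformly, with no preliminary reduction to $x\in N(L)$.

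You instead pass through the cocycle criterion, and the instance $z=y$ you choose is the same computation in disguise. By \eqref{eq:phicond} we have $\varphi(y,x)=\varphi(yx,y)$, so for commuting $x,y$ the condition $\varphi(x,y)=\varphi(xy,y)$ is exactly $\varphi(x,y)=\varphi(y,x)$. That is the second-coordinate commutation condition the paper checks.

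This also shows that your detour through the pair $(y,w)$ is unnecessary. The hypothesis applied to the pair $(x,y)$ alone gives $\varphi(x,y)\neq\varphi(y,x)=\varphi(yx,y)=\varphi(xy,y)$.

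The paper's version is shorter. Yours has the small merit of exhibiting an explicit failure of left nuclearity at the triple $\bigl((x,u),(y,v),(y,v)\bigr)$.

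A minor quibble: the hypothesis does not rule out $i=0$. It only forces $L$ to have no two distinct commuting non-identity elements in that case, and your argument covers that situation as well.
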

\begin{proof}
If $x$ and~$y$ are distinct commuting elements of $L\setminus\{e\}$, then the Mendelsohn triple system associated with~$L$ contains the cyclic triple $(x,y,xy)$ together with its reverse $(y,x,xy)$, and the six ordered pairs appearing in the displayed condition are precisely the ordered pairs of these two triples.
The condition thus states that $\varphi$ is constant on the ordered pairs of each such cyclic triple and attains the single value~$h$ on all remaining ordered pairs of $L\times L$, which amounts to conditions~\eqref{eq:phicond} by the discussion following Proposition~\ref{prop:phi}.
Therefore by Proposition~\ref{prop:phi} the loop $L_\varphi$ is a Mendelsohn loop and by Proposition~\ref{prop:nucchar} its nucleus contains $\{e\}\times\mathbb{F}_2^i$.

To see that the other elements do not lie in the nucleus we can use Lemma~\ref{lem:commut} and show that for each $(x,u)\in L\times\mathbb{F}_2^i$ such that $x\neq e$, there exists $(y,v)\in L\times\mathbb{F}_2^i$ which does not commute with~$(x,u)$.
Indeed for any $y\in L\setminus\{x,e\}$ and $v\in\mathbb{F}_2^i$ we have
\[
  (x,u) *_\varphi (y,v) = (xy,\, u + v + \varphi(x,y)) \neq (yx,\, u + v + \varphi(y,x)) = (y,v) *_\varphi (x,u),
\]
because either $xy\neq yx$ or $\varphi(x,y)\neq\varphi(y,x)$.
\end{proof}

Example~\ref{ex:8} demonstrates the construction from the previous proposition by starting with $(L,\cdot) = (\mathbb{Z}_4,\oplus)$, where $\oplus$ is the bitwise exclusive-or operation, and defining $\varphi$ as $\varphi(1, 2) = \varphi(2, 3) = \varphi(3, 1) = 1$ and as~$0$ elsewhere on its domain.
Each element $(x,u)\in\mathbb{Z}_4\times\mathbb{F}_2$ of the resulting loop is represented as~$x+4u\in\mathbb{Z}_8$ in the example.

For a Steiner loop~$L$ and $i = 1$, the mappings~$\varphi$ satisfying the hypotheses of the previous proposition with $h = 0$ correspond exactly to the orientations of the underlying Steiner triple system, and the resulting loops~$L_\varphi$ are the oriented Steiner loops of exponent~2 introduced by Strambach and Stuhl~\cite{strambachstuhl}.
In particular, the loop of Example~\ref{ex:8} is the oriented Steiner loop of exponent~2 associated with the oriented STS(3).

Every Mendelsohn loop with a non-trivial nucleus arises from the construction of Proposition~\ref{prop:phi} via a suitable choice of~$\varphi$, as we show in Proposition~\ref{prop:complete} below.
This is a semisymmetric analogue of the classical fact that every loop containing a given abelian group in its centre is a factor system extension of that group by the corresponding quotient loop~\cite[p.~334]{bruck}.

\begin{proposition}\label{prop:complete}
Let $L$ be a Mendelsohn loop with nucleus isomorphic to~$\mathbb{F}_2^i$.
For $K = L/N(L)$ there exists $\varphi: K\times K\to\mathbb{F}_2^i$ satisfying
conditions~\eqref{eq:phicond} such that $L\cong K_\varphi$.
\end{proposition}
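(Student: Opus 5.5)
Write $N = N(L)$ and identify it with $\mathbb{F}_2^i$, writing its operation additively; this is possible because by Proposition~\ref{prop:nuclei} $N$ is an elementary abelian $2$-group. We know $N$ is normal in~$L$. The factor loop $K = L/N$ satisfies the semisymmetric law and has identity $N$, so it is a Mendelsohn loop. The plan is the standard Schreier transversal argument. Choose a section $s\colon K\to L$ with $s(N) = e$ and $s(X)\in X$ for every coset~$X$. Every element of $L$ can then be written uniquely as $s(X)\,u$ with $X\in K$ and $u\in N$. Define $\varphi(X,Y)\in N$ by
\[
  s(X)\,s(Y) = s(XY)\,\varphi(X,Y).
\]
This is well defined because $s(X)s(Y)$ lies in the coset $XY$ and $N$ acts regularly on each coset: $s(XY)u = s(XY)v$ forces $u=v$ by cancellation.

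Next I verify that $\theta\colon K_\varphi\to L$, $(X,u)\mapsto s(X)u$, is a homomorphism. Since $\theta$ is a bijection, it is then an isomorphism. The key computation uses only that $u,v\in N$ are nuclear and central (Lemma~\ref{lem:commut}, Proposition~\ref{prop:nuclei}), so they can be moved freely past and reassociated with anything:
\[
  s(X)u\cdot s(Y)v = (s(X)s(Y))\,(uv) = s(XY)\,\varphi(X,Y)\,u\,v = \theta\bigl(XY,\,u+v+\varphi(X,Y)\bigr).
\]
In full detail this repeatedly applies $N_\lambda = N_\mu = N_\rho$ to the elements $u$, $v$ and $\varphi(X,Y)$. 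I expect this bookkeeping to be the only real step requiring care, since $L$ is not associative.

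It remains to check conditions~\eqref{eq:phicond}. Since $\theta$ is an isomorphism, $K_\varphi$ is isomorphic to $L$ and is therefore a Mendelsohn loop. Proposition~\ref{prop:phi} then gives \eqref{eq:phicond} automatically. Alternatively, one can check directly that $s(e)=e$ gives $\varphi(X,N)=0=\varphi(N,Y)$. Also, $(s(X)s(Y))s(X) = s(Y)$ together with the homomorphism computation yields $\varphi(X,Y)+\varphi(XY,X)=0$, which in characteristic $2$ is exactly the first condition.

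The main obstacle is justifying that the nuclear elements can be reassociated past non-nuclear products. Proposition~\ref{prop:nuclei} handles this cleanly, since every element of $N$ lies in all three nuclei and commutes with everything.
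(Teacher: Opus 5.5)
Your proposal is correct and follows essentially the same route as the paper: a section $s$ with $s(e_K)=e_L$, a factor system defined by $s(X)s(Y)=s(XY)\varphi(X,Y)$ (the paper defines it as $s(x)s(y)\cdot s(xy)$, which is equivalent by the semisymmetric law), and the bijection $(X,u)\mapsto s(X)u$, shown to be a homomorphism using that $N(L)$ is nuclear and central. The only minor difference is how you get $\varphi(X,Y)=\varphi(XY,X)$: you read it off from the isomorphism, either via Proposition~\ref{prop:phi} or by applying the homomorphism to $(s(X)s(Y))s(X)=s(Y)$, whereas the paper verifies it directly from the semisymmetric law before constructing the isomorphism.
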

\begin{proof}
Choose an isomorphism $\theta: \mathbb{F}_2^i \to N(L)$ and a section $s: K\to L$ with $s(e_K) = e_L$.
Since for any $x$, $y\in K$ the elements $s(x)\cdot s(y)$ and $s(xy)$ have the same image $xy$ under the canonical projection $L \to K$ and $K$ has exponent~2, the product of the two elements lies in~$N(L)$.
This allows us to define $\varphi: K\times K\to\mathbb{F}_2^i$ by $\varphi(x,y) = \theta^{-1}\bigl(s(x)\,s(y)\cdot s(xy)\bigr)$.
Equivalently, $s(xy)\, \theta\bigl(\varphi(x,y)\bigr) = s(x)\,s(y)$ by applying the semisymmetric law to the definition.
Whenever $s(x)\,s(y)\cdot s(xy) = z$, it follows that $s(y) = s(xy)\,z \cdot s(x) = z \cdot s(xy)\,s(x)$, hence $s(xy)\,s(x)\cdot s(y) = z$, using the semisymmetric law and the centrality of~$z$.
Thus we have
\[
  \varphi(x,y) = \theta^{-1}\bigl(s(xy)\,s(x)\cdot s(y)\bigr) = \theta^{-1}\bigl(s(xy)\,s(x)\cdot s(xy\cdot x)\bigr) = \varphi(xy,x),
\]
and the condition $\varphi(x,e_K) = 0 = \varphi(e_K,y)$ follows from $s(e_K)=e_L$.

Since every element of~$L$ decomposes uniquely as a product of a coset representative and an element of~$N(L)$, the map $\psi: K_\varphi \to L$ defined by $\psi(x,u) = s(x)\cdot \theta(u)$ is a bijection, and
\[
\begin{split}
  \psi(x,u)\cdot\psi(y,v)
  &= s(x) \cdot s(y) \cdot \theta(u) \cdot \theta(v)
   = s(xy) \cdot \theta\bigl(\varphi(x,y)\bigr) \cdot \theta(u) \cdot \theta(v) \\
  &= s(xy) \cdot \theta\bigl(\varphi(x,y) + u + v\bigr)
   = \psi\bigl((x,u)*_\varphi(y,v)\bigr),
\end{split}
\]
where the first equality uses the centrality of~$N(L)$ and the second the identity noted above.
\end{proof}

\section{Open problems}
Proposition~\ref{prop:complete} describes every Mendelsohn loop with a non-trivial nucleus as~$L_\varphi$ for a suitable choice of~$\varphi$, but the correspondence between the mappings~$\varphi$ and the resulting loops is not one-to-one.
For Steiner loops, the corresponding equivalence and isomorphism theory of Schreier extensions was developed in~\cite[Section~5]{FFG}.
It is an open problem to extend this theory to Mendelsohn loops and to use it to determine the isomorphism classes of Mendelsohn loops with a given nucleus beyond the orders accessible to the exhaustive search of Section~\ref{sec:existence}, as was done in~\cite{FG} for the Steiner triple systems of orders 19 and~27 that contain Veblen points.

The oriented Pasch configurations also deserve further study.
Reading the three orientations with the various points of the configuration in the role of~$u$ yields several conditions on an element of a Mendelsohn loop, each of which can be expressed as an identity on the left translations of the loop.
Proposition~\ref{prop:pasch} identifies a condition arising from the second orientation as membership in the nucleus, and it is an open problem to determine which of the remaining conditions define classes with non-trivial structure in Mendelsohn loops.

\section{Acknowledgements}
The author would like to thank Ale\v s Dr\' apal for pointing out the possible connection of Lemma~\ref{lem:nonex} with LCC loops and for suggesting the investigation of the binary operation in Proposition~\ref{prop:phi}, and Kyle M. Lewis for suggesting the construction that is used in the proof of
Theorem~\ref{thm:empty}.
The author acknowledges the use of Claude Fable 5 (Anthropic) in developing the statements and proofs of Propositions \ref{prop:nucchar} and~\ref{prop:complete}, the discussion following Proposition~\ref{prop:pasch} and in conducting the literature review, which brought the papers \cite{FFG}, \cite{FG} and \cite{strambachstuhl} to the author's attention.
The author has verified all resulting mathematical content and takes full responsibility for its correctness.

\end{document}